\numberwithin{equation}{section}
\definecolor{myred}{rgb}{0.75,0,0}
\definecolor{mygreen}{rgb}{0,0.5,0}
\definecolor{myblue}{rgb}{0,0,0.65}
    \def\CM{{\mathbb{C}}}
    \def\FM{{\mathbb{F}}}
  \def\gg{{\mathfrak g}}
    \def\QM{{\mathbb{Q}}}
    \def\ZM{{\mathbb{Z}}}
    \def\NC{{\mathcal{N}}}
    \def\OC{{\mathcal{O}}}
\def\FS{{\EuScript F}}
\def\GS{{\EuScript G}}
\def\XS{{\EuScript X}}
\def\a{\alpha}
\def\G{\Gamma}
\def\l{\lambda}
\newcommand{\nc}{\newcommand} \newcommand{\renc}{\renewcommand}
\newcommand{\rdots}{\mathinner{ \mkern1mu\raise1pt\hbox{.}
    \mkern2mu\raise4pt\hbox{.}
    \mkern2mu\raise7pt\vbox{\kern7pt\hbox{.}}\mkern1mu}}
\def\mini{{\mathrm{min}}}
\def\reg{{\mathrm{reg}}}
\newcommand{\elem}[1]{\stackrel{#1}{\longto}}
\def\ov{\overline}
\def\un{\underline}
\def\to{\rightarrow}
\def\longto{\longrightarrow}
\nc{\triright}{\stackrel{[1]}{\to}}
\nc{\longtriright}{\stackrel{[1]}{\longto}}
\nc{\Br}{\mathcal{B}}
\nc{\HotRR}{{}_R\mathcal{K}_R}
\nc{\HotR}{\mathcal{K}_R}
\nc{\excise}[1]{}
\nc{\defect}{\text{df}}
\nc{\h}[1]{\underline{H}_{#1}}
\nc{\Ga}{\mathbb{G}_a} 
\nc{\Gm}{\mathbb{G}_m} 
\nc{\Perv}{{\mathbf{P}}}
\nc{\IH}{{\mathrm{IH}}}
\nc{\ic}{\mathbf{IC}}
\nc{\gl}{{\mathfrak{gl}}}
\renc{\sl}{{\mathfrak{sl}}}
\renc{\sp}{{\mathfrak{sp}}}
\nc{\HBM}{H^{BM}}
\DeclareMathOperator{\For}{For} 
\newtheorem{theorem}{Theorem}[section]
\newtheorem{lemma}[theorem]{Lemma}
\newtheorem{proposition}[theorem]{Proposition}
\newtheorem{corollary}[theorem]{Corollary}
\theoremstyle{definition}
\newtheorem{definition}[theorem]{Definition}
\newtheorem{conjecture}[theorem]{Conjecture}
\theoremstyle{remark}
\newtheorem{remark}[theorem]{Remark}
\def\uk{\underline{k}}
\def\pt{{\mathrm{pt}}}
\def\H{{\mathsf{H}}}
\def\Mod{\mathrm{mod}^\ZM}
\newcommand{\simto}{\stackrel{\sim}{\longrightarrow}}
\nc{\om}{\varpi^{\vee}}
\begin{document}

\title[Kumar's criterion modulo $p$]{Kumar's criterion modulo $p$}

\author{Daniel Juteau}
\address{LMNO, Université de Caen Basse-Normandie, CNRS, BP 5186,
  14032 Caen Cedex, France}
\email{daniel.juteau@math.unicaen.fr}
\urladdr{http://math.unicaen.fr/~juteau}

\author{Geordie Williamson}
\address{Mathematical Institute, University of Oxford, 24-29 St Giles', Oxford, OX1 3LB, UK}
\email{geordie.williamson@maths.ox.ac.uk}
\urladdr{http://people.math.ox.ac.uk/williamsong}

 \maketitle


\begin{abstract}
We prove that equivariant multiplicities may be used to determine whether attractive fixed points on
$T$-varieties are $p$-smooth. This gives a combinatorial criterion for
the determination of the $p$-smooth locus of Schubert varieties for
all primes $p$.
\end{abstract}
\maketitle

\section{Introduction}
\label{sec:introduction}

Let $X$ be an $n$-dimensional complex algebraic variety equipped with
its classical topology and let $p$ be a prime number. A point $x \in X$ is
\emph{$p$-smooth} if one has an isomorphism
\[
\H^{\bullet}(X, X \setminus \{ x \}; \FM_p) \cong \H^{\bullet}(\CM^n, \CM^n \setminus \{ 0 \}; \FM_p).
\]
The \emph{$p$-smooth locus} is the largest open subset of $X$
consisting of $p$-smooth points. Similarly one defines \emph{$\ZM$-smooth}
and \emph{rationally smooth} by replacing the $\FM_p$-coefficients
above by $\ZM$ and $\QM$ respectively. One has inclusions
\cite[Section 8.1]{FW}
\[
\begin{array}{c}\text{smooth}\\ \text{locus}\end{array} 
\subset \begin{array}{c}\text{$\ZM$-smooth}\\ \text{locus}\end{array}
\subset \begin{array}{c}\text{$p$-smooth}\\ \text{locus}\end{array}
\subset \begin{array}{c}\text{rationally}\\ \text{ smooth locus}\end{array}
\]
all of which are strict in general. (These inclusions are already strict for Kleinian surface singularities: all such singularities are rationally smooth (being finite quotient singularities), however a singularity of type $A_n$ is $p$-smooth if and only if $p \nmid n+1$ and a singularity of type $E_8$ is $\ZM$-smooth.)

The variety $X$ is $p$-smooth if and only if the constant sheaf on $X$
with coefficients in $\FM_p$ is Verdier self-dual (up to a shift). It
follows that if $X$ is $p$-smooth and compact, then Poincar\'e duality
holds for the cohomology of $X$ with coefficients in $\FM_p$ (and
similarly, if $X$ is $\ZM$- or rationally smooth; in the case of
$\ZM$, it is a derived duality).

As is clear from the definition, the notion of $p$-smoothness is
topological in nature. In general it seems difficult to decide whether
a point $x \in X$ is $p$-smooth. In this paper we give a
combinatorial criterion which often enables one to decide whether an isolated
fixed point of a $T$-variety is $p$-smooth.



Let $T$ denote an algebraic torus, let $S$ denote the symmetric
algebra on the character lattice of $T$, and let $Q$ denote its fraction
field. We view $S$ as a graded algebra with the characters of $T$ in
degree $2$. We assume that $X$ is a normal $T$-variety and that $T$
has finitely many fixed points on $X$.

To any fixed point $x \in X^T$ one may associate its ``equivariant
multiplicity'' $e_xX \in Q$ which is obtained by localising the
fundamental class in equivariant Borel-Moore homology (see Definition
\ref{def:eqmult}). It is of the form
\begin{equation}
\label{eq:eqmult intro}
e_xX = \frac{f_x}{\chi_1 \chi_2 \dots \chi_m}
\end{equation}
with $f_x \in S$, where $\chi_1, \chi_2, \dots, \chi_m$ are characters of $T$ which
occur in the tangent space of $X$ at $x$. This rational function is
homogeneous of degree $-2n$ (recall that we have doubled
degrees, and that $n = \dim_\CM X$).

For example, if $x \in X$ is a smooth point, then 
\[
e_xX = \frac 1 {\det T_x X} = \frac{1}{\chi_1 \chi_2 \dots \chi_n}
\]
where $\chi_1, \chi_2, \dots, \chi_n$ are all the characters of $T$ on the
tangent space of $X$ at $x$.

Even for singular points $x \in X$ the
equivariant multiplicity is often readily computed; indeed, if $\pi: Y
\to X$ is a proper surjective $T$-equivariant morphism of finite
degree $d$ and $Y^T$ is finite then, for $x \in X^T$, we have
\cite[Lemma 16]{Brion}
\begin{equation}
\label{degree d}
e_xX = \frac 1 d \sum_{\substack{y\in Y^T\\ \pi(y) = x}} e_yY.
\end{equation}

It is particularly interesting to consider the case of
Schubert varieties in flag varieties for reductive algebraic or
Kac-Moody groups $G$ with the action of a maximal torus $T$.
In this case, $T$-fixed points on the flag
variety are in bijection with elements of the Weyl group and 
(thanks to the existence of Bott-Samelson resolutions) there exist
purely algebraic formulas for the equivariant multiplicity
in terms of the Weyl group action on the root system.

For Schubert varieties, the rational functions
$e_x X$ were first defined in 1987 by Kumar, using the nil-Hecke
ring \cite{Kumar}. He showed that they may
be used to detect the smooth and rationally smooth loci of Schubert
varieties. More precisely, if the fraction in \eqref{eq:eqmult intro}
is reduced, then
\[
\text{$x \in X$ is smooth} \Longleftrightarrow f_x = 1.
\]
Moreover, if $U := X \setminus \{ x \}$ is rationally smooth then:
\[
\text{$x \in X$ is rationally smooth} \Longleftrightarrow f_x \text{
  is a constant.}
\]

This result was generalised to cover $T$-varieties with
isolated fixed points and finitely many one-dimensional orbits by
Arabia \cite{Arabia}. A more general statement due to Brion
\cite{Brion} gives a relative version in terms of fixed points
under codimension one subtori.

In the rationally smooth case, it is natural
ask what other geometric or topological significance this numerator
might have. In the case of a rationally smooth point in a Schubert
variety in a flag variety of a simple algebraic group not of type
$G_2$, Kumar interpreted the numerator of the equivariant multiplicity
as the multiplicity of the point \cite[Remark 5.3]{Kumar}.

The goal of this paper is to show that equivariant multiplicities
may also be used to detect $p$-smoothness:

\begin{theorem} 
\label{thm:main}
Let $X$ be an affine $T$-variety with an attractive (hence unique) fixed
point $x$. If $U = X \setminus \{x\}$ is $p$-smoooth and if
$H_T^{\bullet}(U; \ZM)$ is free of $p$-torsion, then
\[
 \text{$x \in X$ is $p$-smooth}
\Longleftrightarrow
f_x \in \ZM \text{ and }p \nmid f_x, 
\]
where $f_x$ is the numerator of the fraction \eqref{eq:eqmult intro},
assumed to be reduced.
\end{theorem}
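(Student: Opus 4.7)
The plan is to reformulate $p$-smoothness at $x$ as a freeness property of equivariant local cohomology, then analyze it via the long exact sequence of the pair $(X, U)$, and finally identify the relevant cohomology class with the integral/mod $p$ reduction of the equivariant multiplicity.

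\textbf{Step 1 (Equivariant reformulation).} Since $x$ is attractive, $X$ deformation retracts $T$-equivariantly to $\{x\}$, so $H^\bullet_T(X; \FM_p) = S_{\FM_p}$. I claim that $x$ is $p$-smooth if and only if the equivariant local cohomology $H^\bullet_{T,x}(X; \FM_p)$ is a free graded $S_{\FM_p}$-module of rank one, generated in degree $2n$. The forward direction is immediate: $p$-smoothness says $i_x^! \underline{\FM_p}_X \simeq \FM_p[-2n]$, which lifts to the equivariant derived category. For the converse, one uses equivariant formality: the Leray-type spectral sequence $S_{\FM_p} \otimes H^\bullet_x(X; \FM_p) \Rightarrow H^\bullet_{T,x}(X; \FM_p)$ must collapse (the target is free over $S_{\FM_p}$ generated in degree $2n$), and tensoring the equivariant answer with $\FM_p$ over $S_{\FM_p}$ recovers the ordinary local cohomology concentrated in degree $2n$.

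\textbf{Step 2 (LES analysis).} Apply the LES of the pair $(X,U)$:
\[
\cdots \to H^{i-1}_T(U; \FM_p) \to H^i_{T,x}(X; \FM_p) \to S^i_{\FM_p} \xrightarrow{r^i} H^i_T(U; \FM_p) \to \cdots.
\]
Because $U$ is $p$-smooth and $H^\bullet_T(U; \ZM)$ is $p$-torsion-free, universal coefficients give $H^\bullet_T(U; \FM_p) = H^\bullet_T(U;\ZM)\otimes \FM_p$, and the corresponding integral LES has well-behaved terms outside of $H^\bullet_{T,x}(X;\ZM)$. Consequently, the target condition from Step 1 is equivalent to the existence of a class in $H^{2n}_{T,x}(X; \FM_p)$ that generates a free rank-one $S_{\FM_p}$-submodule; all higher local cohomology is determined by multiplication by $S_{\FM_p}$, and lower local cohomology vanishes.

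\textbf{Step 3 (Identification with equivariant multiplicity).} The equivariant multiplicity is defined by localization: the integral fundamental class $[X]^T \in \HBM_{2n}^T(X; \ZM)$ maps under the localization isomorphism to $e_xX \cdot [x]^T \in \HBM_0^T(\{x\}; \QM)_Q \subset Q^{-2n}$. Via equivariant Verdier duality, $[X]^T$ corresponds to an integral class $\eta^T \in H^{2n}_{T,x}(X; \ZM)$, and the image of $\eta^T$ in $S^{2n} = H^{2n}_T(X;\ZM)$ under the connecting map records the numerator $f_x$: more precisely, using the degree-$d$ formula \eqref{degree d} (or a Bott-Samelson-type resolution) one expresses the image of $\eta^T$ as $f_x$ up to multiplication by the denominator characters $\chi_i$. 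Reducing mod $p$ and combining with Steps 1--2, the class $\bar\eta^T$ generates a free rank-one $S_{\FM_p}$-submodule in degree $2n$ exactly when $f_x$ becomes a unit in $S_{\FM_p}$, i.e.\ a nonzero element of $\FM_p$; this is the condition $f_x \in \ZM$ and $p \nmid f_x$.

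\textbf{Main obstacle.} The hardest step is Step 3. Over $\QM$, the relation $[X]^T = e_x X \cdot [x]^T$ is a direct consequence of the localization theorem, but integrally (and mod $p$) no general localization theorem is available, so one must construct $\eta^T$ explicitly and track its image through the LES without inverting Euler classes. This will require either an equivariant resolution of $X$ together with \eqref{degree d}, or a careful Smith-theoretic/sheaf-theoretic argument using the attracting $\Gm$-action to force equivariant formality of the pair $(X,U)$ with $\FM_p$-coefficients.
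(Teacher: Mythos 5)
Your Steps 1--2 capture the right general shape (localise the problem at $x$, use the long exact sequence of the pair, and exploit the torsion-freeness of $H_T^\bullet(U)$ via universal coefficients), but the proof has a genuine gap exactly where you flag it: Step 3 is not an argument but a statement of the difficulty, and the difficulty is real. The assertion that the image of an integral class $\eta^T$ ``records the numerator $f_x$ up to multiplication by the denominator characters'' is precisely what needs proof, and it cannot be extracted from \eqref{degree d} or a resolution alone: the equivariant multiplicity lives in the fraction field $Q$, and you need to convert the rational identity $\mu_X \mapsto e_xX$ into an \emph{integral} identity inside an honest $S$-module before reducing mod $p$. Moreover, even granting such an identity, your final equivalence silently assumes that the denominator $\pi = \chi_1\cdots\chi_m$ does not itself become divisible by $p$ after the fraction is reduced; this is false in general and must be proved from the hypothesis that $H_T^\bullet(U;\ZM)$ has no $p$-torsion (it is Lemma \ref{lem:valpi} in the paper, and it genuinely uses that hypothesis).

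The paper's route around your ``main obstacle'' is worth internalising. One works throughout with $\ZM_p$-coefficients and the equivariant dualising complex, so that the short exact sequence $0 \to S \to \H_T^\bullet(X,\omega_X) \to \H_T^\bullet(U,\omega_U) \to 0$ consists of \emph{free} $\ZM_p$-modules (Lemma \ref{lem:ofree}, using the no-$p$-torsion hypothesis). Freeness means each term injects into its extension of scalars to $\QM_p$, where the localisation theorem and rational smoothness identify everything with $S_{\QM_p}$ and give $\varphi(1) = \pi/d$; the injection then transports the relation $d\cdot\varphi(1) = \pi\cdot\mu_X$ back to an identity in the integral module $\H$ (Lemma \ref{lem:pi}). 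This is exactly the ``construct $\eta^T$ explicitly without inverting Euler classes'' step you were missing. The rest is a $p$-adic valuation argument relative to the lattice $M = S\mu_X \subset \H_{\QM_p}$: every $h \in \H$ satisfies $v_M(h) \ge -v_p(d)$, one passes to non-equivariant cohomology via $\H/(S^+\H)$ (which requires the regular-sequence statement of Proposition \ref{prop:equiextend}, a point your Step 1 ``converse'' glosses over), and $p$-smoothness becomes torsion-freeness of this quotient; if $p \nmid d$ multiplication by $d$ kills the torsion and is invertible, while if $p \mid d$ one exhibits an explicit nonzero torsion class. Note also that the statement $f_x \in \ZM$ is not free: it comes from first reducing to the rationally smooth case via Kumar's criterion, a reduction absent from your write-up.
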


In the theorem, $H_T^{\bullet}(U; \ZM)$ denotes the $T$-equivariant
cohomology of $U$ with coefficients in $\ZM$ (see Section
\ref{sec:eqcoh}).  Note that by a theorem of Sumihiro \cite{Sumihiro},
any attractive $T$-fixed point on a normal $T$-variety has a
$T$-stable affine open neighbourhood, so the requirement that $X$ be
affine is mostly harmless.

On the other hand, requiring that $H_T^{\bullet}(U; \ZM)$ be free of $p$-torsion may seem
quite restrictive (and of course does not appear in the
criterion for rational smoothness). Fortunately this condition always holds for (normal
slices in) Schubert varieties, as can be proved \cite{FW} using parity sheaves
\cite{JMW2} (see Section \ref{sec:schubert} for a precise
statement). Hence we obtain a combinatorial recursive criterion to
determine the $p$-smooth locus of Schubert varieties, refining
Kumar's orginal criterion.

To be more specific, let $G \supset B \supset T$ be a complex
reductive algebraic group with a Borel subgroup and maximal torus.
The set of $T$-fixed points on the flag variety $G/B$ may be
identified with the Weyl group $W$, and we have the Bruhat decomposition
$G/B = \bigsqcup_{x\in W} BxB/B$, whose closure relation is given by the
Bruhat order $\leq$ on $W$. Given a Schubert variety
$X_z := \ov{BzB}/B$, let us denote by $f_{y,z}$ the numerator appearing
in the equivariant multiplicity of $X_z$ at the $T$-fixed point $y \leq z$.

\begin{theorem}
\label{thm:main schubert}
With the above notation, a $T$-fixed point $x$ in $X_z$ is $p$-smooth
if and only if for all $y$ in the interval $[x,z]$, the numerator $f_{y,z}$ is an
integer and is not divisible by $p$.
\end{theorem}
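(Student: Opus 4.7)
The plan is to reduce Theorem~\ref{thm:main schubert} to Theorem~\ref{thm:main} by a descending induction on $\ell(y)$ for $y$ running through $[x,z]$. The first observation is that the $p$-smooth locus of $X_z$ is open and, since $B$ acts on $X_z$ by homeomorphisms and $p$-smoothness is a local topological invariant, it is $B$-stable; it is therefore a union of Schubert cells. Consequently, a $T$-fixed point $x$ is $p$-smooth in $X_z$ if and only if every Schubert cell $ByB/B$ with $y \in [x,z]$ lies in the $p$-smooth locus, if and only if every $T$-fixed point $y \in [x,z]$ is $p$-smooth in $X_z$. It therefore suffices to prove, for each $y \leq z$, that $y$ is $p$-smooth in $X_z$ if and only if $f_{y',z} \in \ZM$ and $p \nmid f_{y',z}$ for every $y' \in [y,z]$.

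\emph{Local slices.} For each such $y$, let $V_y := yU^- B/B \cap X_z$, where $U^-$ is the unipotent radical of the opposite Borel. This is a $T$-stable affine open neighbourhood of $y$ in $X_z$ with $y$ as its unique (hence attractive) $T$-fixed point, and a Schubert cell $BwB/B$ meets $V_y$ exactly when $w \in [y,z]$. Since $V_y$ is open in $X_z$, both the equivariant multiplicity and the notion of $p$-smoothness at $y$ are unchanged on passing from $X_z$ to $V_y$; in particular, the numerator of $e_y V_y$ is $f_{y,z}$.

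\emph{Induction.} We argue by descending induction on $\ell(y)$. The base case $y = z$ is trivial: $z$ is smooth in $X_z$ and $f_{z,z} = 1$. Suppose the equivalence has been proved for every $y' > y$ with $y' \leq z$. Assume the numerator condition holds throughout $[y,z]$; then by the inductive hypothesis every $y' \in (y,z]$ is $p$-smooth in $X_z$, and combining this with the $B$-stability of the $p$-smooth locus and the description of Schubert cells meeting $V_y$ shows that every point of $V_y \setminus \{y\}$ is $p$-smooth. We may therefore apply Theorem~\ref{thm:main} to $V_y$: its remaining hypothesis, that $H_T^{\bullet}(V_y \setminus \{y\}; \ZM)$ is free of $p$-torsion, is precisely the property of (punctured) affine neighbourhoods in Schubert varieties established in~\cite{FW} via parity sheaves~\cite{JMW2}. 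The theorem then yields that $y$ is $p$-smooth if and only if $f_{y,z} \in \ZM$ and $p \nmid f_{y,z}$, and combined with the inductive hypothesis this completes the induction.

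\emph{Main obstacle.} Once Theorem~\ref{thm:main} is granted, the only non-formal input is the $p$-torsion freeness of $H_T^{\bullet}(V_y \setminus \{y\}; \ZM)$ for each slice $V_y$, which is far from automatic and relies on the parity sheaf machinery of~\cite{JMW2,FW}. With both ingredients in hand, the remainder of the proof is a formal induction that propagates Kumar's numerator criterion over the Bruhat interval using the openness and $B$-stability of the $p$-smooth locus.
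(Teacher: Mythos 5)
Your overall plan---descending induction over the Bruhat interval using the openness and $B$-stability of the $p$-smooth locus, and applying Theorem~\ref{thm:main} locally at each fixed point with the $p$-torsion-freeness input from~\cite{FW}---is exactly the structure the paper intends. However, there is a genuine gap in the choice of local model: you work with the full big-cell neighbourhood $V_y = yU^-B/B \cap X_z$, whereas the paper applies Theorem~\ref{thm:main} to the \emph{transverse slice} $N$ coming from the Kazhdan--Lusztig isomorphism $C_y \times N \cong \widetilde N = V_y$.

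The problem is that $V_y$ contains the \emph{entire} cell $C_y$ (indeed $ByB = U_y \cdot yB \subset yU^-B$ since $U_y = U \cap yU^-y^{-1}$), so $V_y \setminus \{y\}$ contains $C_y \setminus \{y\}$, which is nonempty whenever $\ell(y) > 0$. Those points are $B$-conjugate to $y$, so they are $p$-smooth in $X_z$ \emph{if and only if} $y$ is---which is precisely what you are trying to prove. Thus in the inductive step (``numerator condition $\Rightarrow$ $p$-smooth'') the hypothesis ``$V_y \setminus \{y\}$ is $p$-smooth'' needed to invoke Theorem~\ref{thm:main} cannot be verified without already knowing the conclusion; the claim that ``every point of $V_y\setminus\{y\}$ is $p$-smooth'' simply does not follow from the inductive hypothesis plus $B$-stability. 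In addition, the torsion-freeness statement you cite from~\cite{FW} (Proposition~\ref{prop:torsion-free} in the paper) is stated for the slice $N$, not for $V_y$, so the attribution is also off.

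The fix is to replace $V_y$ by the slice $N$ throughout. The fixed point of $N$ is attractive, the numerator of $e_{x_N}N$ equals $f_{y,z}$ because $e_y V_y = e_0 C_y \cdot e_{x_N} N$ and $C_y$ is smooth, $p$-smoothness of $y$ in $X_z$ is equivalent to $p$-smoothness of $x_N$ in $N$ (again since $V_y \cong C_y \times N$ with $C_y$ smooth), and crucially $N$ meets $C_y$ only in $\{x_N\}$, so $N\setminus\{x_N\}$ meets only cells $C_{y'}$ with $y' \in (y,z]$. With this substitution, your induction closes and Proposition~\ref{prop:torsion-free} supplies exactly the required torsion-freeness, giving the paper's argument.
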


Taking into account Kumar's criterion for smoothness, we obtain:

\begin{corollary}
\label{cor:Z}
The smooth and $\ZM$-smooth loci of Schubert varieties coincide.
\end{corollary}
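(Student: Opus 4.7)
The inclusion smooth $\subset$ $\ZM$-smooth is general and already recalled in the introduction, so the content of Corollary \ref{cor:Z} is the reverse inclusion for Schubert varieties. Both loci are open and $B$-stable in $X_z$, and each Schubert cell $C_w = BwB/B \subset X_z$ is a single $B$-orbit containing a unique $T$-fixed point $wB/B$; thus each locus is a union of Schubert cells, and it suffices to show that any $T$-fixed point $x \leq z$ of $X_z$ which is $\ZM$-smooth is actually smooth.

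Assume $x$ is $\ZM$-smooth, so that $H^\bullet(X_z, X_z \setminus \{x\}; \ZM)$ is $\ZM$ concentrated in degree $2\dim_\CM X_z$. The universal coefficient theorem for cohomology then yields the analogous statement with $\FM_p$-coefficients for every prime $p$ (no Tor contribution appears because the integral local cohomology is free, concentrated in a single degree), so $x$ is $p$-smooth in $X_z$ for every $p$. Theorem \ref{thm:main schubert} then asserts that for each prime $p$ and each $y \in [x,z]$ the numerator $f_{y,z}$ is an integer with $p \nmid f_{y,z}$. Running over all primes simultaneously, $f_{y,z}$ must be a unit in $\ZM$, i.e.\ $f_{y,z} \in \{-1,+1\}$; in particular $|f_{x,z}| = 1$.

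To conclude, I invoke Kumar's original smoothness criterion, recalled in the introduction: $x$ is smooth in $X_z$ iff the numerator $f_{x,z}$ of the reduced equivariant multiplicity equals $1$. With the standard sign convention on the denominator (tangent weights oriented so that at a smooth point $e_x X_z = 1/\prod \chi_i$), the constant $f_{x,z}$ is non-negative, so $|f_{x,z}| = 1$ forces $f_{x,z} = 1$ and $x$ is smooth. The combinatorial substance is entirely supplied by Theorem \ref{thm:main schubert} combined with Kumar's criterion for smoothness; the only genuinely delicate step is the sign bookkeeping needed to promote $|f_{x,z}| = 1$ to $f_{x,z} = +1$, which I expect to be the main obstacle and which should be handled by pinning down Kumar's normalisation at the outset.
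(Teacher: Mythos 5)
Your argument is correct and is essentially the paper's own (the paper simply says the corollary ``follows immediately'' from Theorem \ref{thm:main schubert} together with Kumar's criterion): reduce to $T$-fixed points, pass from $\ZM$-smoothness to $p$-smoothness for all $p$ via universal coefficients, conclude $f_{y,z}=\pm 1$ on $[x,z]$, and invoke Kumar. Your explicit handling of the sign normalisation (forcing $f_{x,z}=+1$ rather than $\pm 1$) is a detail the paper leaves implicit, and it is correctly resolved as you suggest by the positivity of the numerator at an attractive, rationally smooth fixed point (Brion) or by Dyer's formulas.
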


We remark that both Theorem \ref{thm:main schubert} and Corollary
\ref{cor:Z} also hold for Schubert varieties in Kac-Moody flag varieties.

Dyer gives in \cite{Dyer} a detailed combinatorial analysis of
equivariant multiplicities for Kac-Moody flag varieties, and derives a 
criterion, in terms of the Bruhat graph,
for a point to be rationally smooth. He also gives
explicit formulas (in terms of ``generalised binomial coefficients'')
for the numerators in this case. Combining Dyer's
results with our main theorem yields:

\begin{theorem} 
\label{thm:pQ}
Let $X$ be a Schubert variety in a  (finite) flag variety $G/B$ for a
semi-simple algebraic group $G$. Then its $p$-smooth locus is the same
as its rationally smooth locus for the following primes $p$:
\begin{enumerate}
\item all $p$ if $G$ contains only components of types $A$, $D$ and $E$,
\item $p \ne 2$ if $G$ does not contain a component of type $G_2$,
\item $p \ne 2, 3$ in general.
\end{enumerate}
\end{theorem}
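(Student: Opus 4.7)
The plan is to combine Theorem \ref{thm:main schubert} with the explicit combinatorial formulas that Dyer establishes in \cite{Dyer} for the numerators $f_{y,z}$ at rationally smooth points. Since the $p$-smooth locus is always contained in the rationally smooth locus, the content of Theorem \ref{thm:pQ} is the reverse inclusion under the indicated hypotheses on $p$.

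Fix a rationally smooth point $x$ of $X_z$. Because the rationally smooth locus is open, every $y$ in the interval $[x,z]$ is also rationally smooth in $X_z$, and the Kumar--Arabia--Brion criterion ensures that each numerator $f_{y,z}$ is a rational constant. By Theorem \ref{thm:main schubert}, to conclude that $x$ is $p$-smooth it suffices to show that every such $f_{y,z}$ is in fact a positive integer coprime to $p$.

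This is where Dyer's explicit formulas enter: at a rationally smooth point, $f_{y,z}$ is expressed as a ``generalised binomial coefficient'' built from the roots labelling edges of the Bruhat graph at $y$. The rational denominators that appear in such an expression come solely from ratios of squared root lengths. In the simply-laced cases $A$, $D$, $E$ all squared root lengths are equal, so the formulas produce an integer; combining this with classical results (due to Carrell--Kuttler and others) that identify the rationally smooth and smooth loci in simply-laced types, one even gets $f_{y,z} = 1$, which yields~(1). In the doubly-laced types $B$, $C$, $F_4$ the squared length ratio is $2$, so only the prime $2$ can enter the denominators (or the excess of any numerator factor), and (2) follows. In type $G_2$ the squared length ratio is $3$, which allows both $2$ and $3$ to appear, giving~(3).

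The main obstacle is the careful verification, working through Dyer's combinatorics, that no primes other than those coming from squared root length ratios can divide $f_{y,z}$ at a rationally smooth point. Concretely, this amounts to tracking how cardinalities arising from subsets of roots in the Bruhat graph combine with root-length factors inside the generalised binomial coefficients, and checking that any apparent ``extra'' primes cancel. Once this analysis is complete, the positivity of $f_{y,z}$ (it is an equivariant multiplicity at an attractive fixed point of an irreducible $T$-variety) forces it to be a positive integer coprime to the primes~$p$ allowed in each case, and Theorem \ref{thm:main schubert} finishes the proof.
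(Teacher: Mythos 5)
Your approach is the same as the paper's: combine Theorem~\ref{thm:main schubert} with Dyer's analysis of the numerators at rationally smooth points. The one thing you treat as an open step — what you call ``the main obstacle,'' namely verifying that no primes other than those forced by root-length ratios can divide $f_{y,z}$ — is precisely the content of \cite[Corollary 3.5]{Dyer}, which states outright that at a rationally smooth point of a finite-type Schubert variety the numerator has the form $2^a3^b$, with $b$ possibly non-zero only when a component of type $G_2$ is present and with $a=b=0$ in simply-laced types. The paper simply cites that corollary, so there is nothing left to verify. Two small remarks: your heuristic that denominators come ``solely from ratios of squared root lengths'' is a reasonable intuition but is not itself a proof, and the genuine combinatorial work is in Dyer's paper, not here; and your appeal to Carrell--Kuttler/Peterson to get $f_{y,z}=1$ in the simply-laced case is unnecessary (and reverses the logical dependence the paper intends, since Corollary~\ref{cor:Z} together with this theorem is used to \emph{reprove} Peterson's result) — Dyer's corollary already gives $a=b=0$ directly.
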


For a fixed Kac-Moody Schubert variety, one may use our main theorem to
compare the rationally and $p$-smooth loci for any $p$ using Dyer's formulas for
the numerators in equivariant multiplicities. However in the infinite
family of Schubert varieties occuring in a non-finite Kac-Moody flag
there is no uniform bound for the primes dividing the numerators.
For example, in the affine flag variety of $SL_2(\CM)$ all natural numbers
appear.

Note that this theorem has been obtained independently in \cite{FW} using moment
graph techniques. It answers a question of Dyer about a geometric
interpretation of the numerator in the equivariant
multiplicity. It also answers in the affirmative a question
of Soergel: in \cite{Soergel} he asks whether a rationally smooth
Schubert variety in $G/B$ is $p$-smooth, as long as $p$ is larger than
the Coxeter number of $G$.

It is a result due to Peterson (which is reproved by Dyer in
\cite{Dyer}) that the smooth and rationally smooth loci of Schubert
varieties in simply laced type coincide. Combining Corollary \ref{cor:Z}
with \cite{FW}'s proof of Theorem \ref{thm:pQ}, we obtain
another independent proof of this result.

Lastly, let us also point out that the above result gives a quick
proof of a conjecture of Malkin, Ostrik and Vybornov about non
smoothly equivalent singularities in the affine Grassmannian
\cite{MOV} (see Section \ref{sec:schubert}). The point is that, for a
rationally smooth $T$-fixed point $x$ in a Schubert variety, the set of
primes dividing $f_x$ (and hopefully $f_x$ itself, see Conjecture
\ref{conj:torsion} below) is an invariant of the singularity up to smooth
equivalence.

The proof of Theorem \ref{thm:main} is quite straightforward,
but makes heavy use of the equivariant sheaves and
localisation. A key ingredient is that the universal coefficient
theorem (relating cohomology over $\QM_p$, $\ZM_p$ and
$\FM_p$) allows one to view certain equivariant cohomology groups with
coefficients in $\ZM_p$ as lattices inside the cohomology over
$\QM_p$. We then use valuation arguments to deduce the
theorem. However, as mentioned before, the fact that we can always
apply our main theorem to Schubert varieties relies on the results of
\cite{FW}.

Finally, let us note that in \cite{JW} several examples are
computed. (It was on the basis of these examples that we were led to
conjecture Theorem \ref{thm:main}). In these examples we have observed
an even stronger connection which we would like to state as a conjecture. Let $N$
denote a $T$-invariant affine normal slice to a Bruhat cell $BxB/B$ in the
Schubert variety $X_z$ and let $f_{x,z}$ be as above.

\begin{conjecture}
\label{conj:torsion}
If $U := N \setminus \{x\}$ is smooth and $N$ is rationally smooth, then the order of the torsion
subgroup of $\H^{\bullet}(U, \ZM)$ is $f_{x,z}$.
\end{conjecture}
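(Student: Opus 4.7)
The plan is to upgrade the valuation argument behind Theorem~\ref{thm:main}, which only detects whether a prime $p$ divides $f_{x,z}$, into a computation that matches the precise $p$-adic order of $f_{x,z}$ with the $p$-primary length of the torsion subgroup of $\H^{\bullet}(U;\ZM)$.

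First I would reformulate the statement in terms of local cohomology. Since $x$ is attractive, $N$ is $T$-equivariantly contractible to $x$, so $\H^{\bullet}(N;\ZM) \cong \ZM$ in degree $0$, and the long exact sequence of the pair $(N,U)$ yields a degree-shift isomorphism on torsion subgroups between $\H^{\bullet}(U;\ZM)$ and $\H^{\bullet}(N,U;\ZM) \cong \H^{\bullet}_{\{x\}}(N;\ZM)$. Rational smoothness of $N$ at $x$ implies that the latter is $\QM$ in degree $2n$ and zero in every other degree after $\otimes \QM$, so it has the form $\ZM \oplus T^{2n}$ in degree $2n$ and a finite group $T^k$ in each other degree. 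The conjecture thus becomes $\prod_k |T^k| = f_{x,z}$.

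Next, fix a prime $p$ and reduce to showing $\sum_k v_p(|T^k|) = v_p(f_{x,z})$. Theorem~\ref{thm:main} already settles the case $v_p(f_{x,z}) = 0$; for the general case I would rerun its argument with coefficients in $\ZM/p^k\ZM$ for every $k \geq 1$ in place of $\FM_p$, extracting in the limit the complete $\ZM_p$-Smith-normal-form of $\H^{\bullet}_{\{x\}}(N;\ZM_p)$ rather than only its mod-$p$ reduction. The arithmetic input is equivariant localisation: two $S \otimes \ZM_p$-lattices in the localised rational equivariant Borel--Moore homology at $x$ arise naturally, one generated by the image of the equivariant fundamental class $[N]_T$, which localises to $f_{x,z}/(\chi_1 \cdots \chi_m)$, and the ``smooth reference'' lattice $(\chi_1 \cdots \chi_m)^{-1} \cdot (S \otimes \ZM_p)$, and their index has $p$-adic valuation $v_p(f_{x,z})$. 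A diagram chase through the long exact sequence for $(N,U)$ in equivariant cohomology with $\ZM/p^k\ZM$-coefficients, using the hypothesis (true for Schubert slices by \cite{FW}) that $H^{\bullet}_T(U;\ZM)$ is free of $p$-torsion, should identify this index with $\sum_k v_p(|T^k|)$.

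The main obstacle is precisely this last identification: Theorem~\ref{thm:main} only needs a valuation argument of length one, whereas here the integral equivariant cohomology of $N$ must record each cyclic summand of the torsion, not merely its top Bockstein image. A promising route is to use a $T$-equivariant resolution $\pi : Y \to N$ in combination with the sum formula \eqref{degree d}: the matrix of fixed-point restrictions $H^{\bullet}_T(Y;\ZM_p) \to \bigoplus_{y \in Y^T} (S \otimes \ZM_p)$ then has a Smith normal form whose elementary divisors should account simultaneously for $f_{x,z}$ and for $\bigoplus_k T^k$. The examples of \cite{JW}, in which this precise match is visible, should guide the choice of resolution and the exact form of the matching.
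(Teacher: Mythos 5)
This statement is Conjecture~\ref{conj:torsion}: the paper does not prove it, but states it as an open conjecture supported only by the examples computed in \cite{JW} and in the final section. So there is no proof in the paper to compare against, and the question is whether your proposal actually closes the gap that led the authors to leave this as a conjecture. It does not. Your own text identifies the missing step ("the main obstacle is precisely this last identification") and then offers only a heuristic for how one might attack it. That identification is exactly where the method of the paper stops: the valuation argument in the proof of Theorem~\ref{thm:main'} uses the single relation $d\cdot\varphi(1)=\pi\cdot\mu_X$ together with the bound $v_M(h)\ge -v_p(d)$ to produce \emph{one} nonzero torsion class $\ov h$ in $\H/S^+\H$ when $p\mid d$, and to kill all torsion when $p\nmid d$. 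It says nothing about the number of torsion summands, their orders, or their distribution across cohomological degrees. To get the \emph{order} of the full torsion subgroup equal to $f_{x,z}$ you need the complete elementary-divisor structure of $\H=\H_T^\bullet(X,\omega_X)$ as a graded $S_{\ZM_p}$-module sitting between $M=S\mu_X$ and $\H_{\QM_p}=S_{\QM_p}\mu_X$; the lattice index you propose to use (between $S\mu_X$ localised at $x$ and the "smooth reference" lattice) is a single number $v_p(f_{x,z})$ and there is no argument given that the graded torsion lengths of $\H/S^+\H$ sum to it. Rerunning the argument with $\ZM/p^k$-coefficients does not obviously produce this either, since each such reduction again only sees a mod-$p^k$ shadow of the relation.

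The resolution/Smith-normal-form idea in your last paragraph is a plausible research direction, but as written it is circular in the relevant place: formula~\eqref{degree d} computes the localised image of $\mu_X$, i.e.\ recovers $f_{x,z}$, whereas what you need is the image lattice of $H_T^\bullet(Y;\ZM_p)\to\bigoplus_{y\in Y^T}S_{\ZM_p}$ \emph{and} a proof that $H_T^\bullet(X,\omega_X)$ is computed from it — neither of which is supplied, and for Bott--Samelson resolutions the fibre over $x$ has many fixed points, so this is a genuinely hard integral computation. In short: your reformulation in the first two paragraphs is sound and consistent with the paper's framework (including the prime-by-prime "local version" the authors mention after the conjecture), but the proposal does not constitute a proof; the statement remains, as in the paper, a conjecture.
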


A local version of the conjecture is the following: if $U$ is
$p$-smooth, then the order of the torsion subgroup of
$\H^{\bullet}(U, \ZM_p)$ is equal to the $p$-part of $d_x$.
Also, one could hope that this conjecture
is true for an attractive fixed point $x$ in an arbitrary
normal affine variety $X$, assuming that $U := X \setminus \{x\}$
is smooth (resp. $p$-smooth) and that $\H^\bullet_T(U,\ZM)$ is free (resp. free of
$p$-torsion).

\subsection{Acknowledgements} We would like to thank Michel Brion who
pointed out errors in previous attempts to prove the main theorem (dating back to
2008!). We also benefited from conversations with Matthew Dyer, Sam
Evens and Peter Fiebig. We are grateful to Shrawan Kumar for feedback
on a preliminary version of this article.

\section{Notation}

All varieties will be complex algebraic equipped with
their classical (metric) topology. We denote by $p$ a prime number,
$\FM_p$ the finite field with $p$ elements and $\ZM_p$ (resp. $\QM_p$)
the $p$-adic integers (resp. numbers). Throughout, $k$ denotes a ring
of coefficients (usually $\ZM$, $\FM_p$, $\ZM_p$ or $\QM_p$). If $M$ is a
$\ZM_p$-module we denote by $M_{\QM_p} := M \otimes_{\ZM_p} \QM_p$ its
extension of scalars to $\QM_p$.

\section{Cohomology and equivariant cohomology}
\label{sec:eqcoh}

Given a variety $X$ we denote by $D(X) = D(X;k)$ the derived category of
$k$-sheaves on $X$. Given $\FS \in D(X)$ we denote by
$\H^{\bullet}(X, \FS)$ its (hyper)cohomology, a graded $k$-module.
If $X$ is acted on by a linear algebraic group $G$ we
denote by $D_G(X) = D_G(X;k)$ the equivariant derived category of
$k$-sheaves on $X$ \cite{BL}. If $EG$ denotes a classifying
space for $G$ then $D_G(X;k)$ may be described as the full subcategory
of $D^b(X \times_G EG)$ consisting of those objects $\FS$ such that
$q^*\FS \cong p^*\GS$ for some $\GS \in D(X)$,
where $p$ and $q$ denote the \emph{p}rojection
and \emph{q}uotient morphisms:
\[
\xymatrix@=0.7pt{
 & & & X \times EG \ar[drrr]^(0.5)q \ar[dlll]_(0.6)p \\
X & & & & & & X \times_G EG
}
\]
We denote by $\For = p_*q^*: D_G(X) \to D(X)$ the forgetful
functor. Given any $\FS \in D_G(X)$ we may consider its equivariant
cohomology
\[
\H^{\bullet}_G(X, \FS) := \H^\bullet(X\times_G EG, \FS) \in \H^{\bullet}_G(\pt)-\Mod
\]
where $\H^{\bullet}_G(\pt)-\Mod$ denotes the category of graded modules
over the graded $k$-algebra $\H^{\bullet}_G(\pt)$. Throughout we write
$\H^{\bullet}(X, \FS)$ instead of $\H^{\bullet}(X, \For(\FS))$.

Now suppose that $G = T$ is an algebraic torus. Let $\XS = \XS^*(T)$ denote
its character lattice (a free $\ZM$-module), $\XS_k$ the extension of scalars of $\XS$ to $k$ and $S = S_k =
S(\XS_k)$ the symmetric algebra of $\XS_k$. If we view $S$ as a graded
algebra with $\deg \XS_k = 2$ then we have a canonical isomorphism of
graded rings
\[
\H^{\bullet}_T(\pt) = S.
\]

As is well known, if $\H^{\bullet}_G(X, \FS)$ is a free
$\H^{\bullet}_G(\pt)$-module, then the ordinary cohomology of $X$ with
coefficients in $\For(\FS)$ is obtained from the equivariant
cohomology by extension of scalars:
\[
\H^{\bullet}(X, \FS) \cong \H^{\bullet}_G(X, \FS)
\otimes_{\H^{\bullet}_G(\pt)} k.
\]
We will need a mild extension of this result when $G = T$ which we
were unable to find in the literature:

\begin{proposition}
\label{prop:equiextend}
Fix a basis $e_1, \dots, e_n$ of $\XS =
  \XS(T)$. Let $\FS \in D_T(X)$ and suppose that the images of $e_1, \dots,
  e_n$ in $S$ give a regular sequence for $\H_T^{\bullet}(X,
  \FS) \in S-\Mod$. Then we have an isomorphism
\[
\H^{\bullet}(X, \FS) \cong \H^{\bullet}_T(X, \FS)
\otimes_S k = \H^{\bullet}_T(X, \FS) / (S^+\H^{\bullet}_T(X, \FS))
\]
where $S^+ = \bigoplus_{i > 0} S^i$ denotes the augmentation ideal
generated by homogeneous elements of strictly positive degree.
\end{proposition}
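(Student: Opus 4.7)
The plan is to argue by induction on the rank $n$ of $T$; the base case $n = 0$ is immediate, since then $T$ is trivial, $S = k$, $S^+ = 0$ and $\H^\bullet_T(X, \FS) = \H^\bullet(X, \FS)$ by definition. For the inductive step I would let $T' \subset T$ be the kernel of the character $e_1 : T \to \Gm$, a subtorus of rank $n-1$ (the basis furnishes a splitting $T \cong T' \times \Gm$). The quotient map
\[
\pi : X \times_{T'} ET \to X \times_T ET
\]
is a principal $\Gm$-bundle, obtained by pulling back the universal $\Gm$-bundle $ET/T' \to BT$ classified by $e_1 \in \XS^*(T) = \H^2(BT, \ZM)$; in particular its Euler class is the image of $e_1 \in S$. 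Moreover, since $ET$ is a free contractible $T'$-space, we have the identification $\H^\bullet(X \times_{T'} ET, \FS) = \H^\bullet_{T'}(X, \FS)$.

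The Gysin long exact sequence attached to $\pi$ with coefficients in $\FS$ then reads
\begin{equation*}
\cdots \to \H^{i-2}_T(X, \FS) \xrightarrow{\cdot e_1} \H^i_T(X, \FS) \to \H^i_{T'}(X, \FS) \to \H^{i-1}_T(X, \FS) \xrightarrow{\cdot e_1} \cdots.
\end{equation*}
The hypothesis that $e_1$ is a nonzerodivisor on $M := \H^\bullet_T(X, \FS)$ breaks this into short exact sequences and gives an isomorphism of graded $S/(e_1)$-modules $\H^\bullet_{T'}(X, \FS) \cong M/e_1 M$. From the exact sequence of character lattices $0 \to \ZM e_1 \to \XS^*(T) \to \XS^*(T') \to 0$, the images $\bar e_2, \ldots, \bar e_n$ form a basis of $\XS^*(T')$, and by the definition of regularity of $e_1, \ldots, e_n$ on $M$, they form a regular sequence on $M/e_1 M$. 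Applying the inductive hypothesis to $T'$ acting on $X$ therefore yields
\[
\H^\bullet(X, \FS) \cong (M/e_1 M)/(\bar e_2, \ldots, \bar e_n)(M/e_1 M) = M/(e_1, \ldots, e_n)M = M/S^+ M,
\]
which is the desired identity.

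The main obstacle is setting up the Gysin sequence rigorously within the equivariant sheaf framework over an arbitrary coefficient ring $k$, and in particular identifying the connecting homomorphism with multiplication by $e_1$. Concretely, I would use the two-row Leray spectral sequence of $\pi$ with coefficients in the pullback of $\FS$, whose $E_2$-page is concentrated in $q = 0, 1$, and verify via naturality with respect to the classifying map of $\pi$ that the $d_2$-differential coincides with cup product with the Euler class of $\pi$. Granted this, everything else is bookkeeping.
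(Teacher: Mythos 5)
Your proposal is correct and takes essentially the same approach as the paper: both reduce to the rank-one case of a $\CM^*$-fibration whose Euler class is $e_1$ (the paper via the two-line spectral sequence, you via the equivalent Gysin sequence), and both then iterate over the chain of subtori coming from the chosen basis of $\XS^*(T)$, using regularity to kill the connecting maps at each stage. The only cosmetic difference is that you identify the intermediate space $X \times_{T'} ET$ as a model for $T'$-equivariant cohomology, whereas the paper works directly with the cohomology of the tower of quotients $X \times_{T_i} ET$ without invoking restriction to the subtorus; this avoids having to mention the functor $D_T(X) \to D_{T'}(X)$, but the content is the same.
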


\begin{proof} First suppose that $\pi : E \to B$ is a (topological)
  $\CM^*$-fibration and that $\FS \in D(B)$. We have a spectral
  sequence
\begin{equation} \label{chernclassss}
\H^p(\CM^*) \otimes \H^q(B, \FS) \Rightarrow \H^{p+q}(E, \pi^* \FS)
\end{equation}
with differential induced by multiplication by the Chern class
$c_1(\pi)$.

Now suppose that $T = \CM^*$ so that $\XS(T) = \ZM e$.
If we apply this to the
$\CM^*$-fibration $q: X \times E\CM^* \to
X \times_{\CM^*} E\CM^*$ then, by considering the pull-back diagram
\[
\xymatrix{
X \times E\CM^* \ar[r] \ar[d]^q & E\CM^* \ar[d] \\
X \times_{\CM^*}E\CM^* \ar[r] & B\CM^*}
\]
we see that the first Chern class of $q$ acts on $\H^{\bullet}_T(X,
\FS)$ as multiplication by the image of $e$ in $S$. 
Multiplication by $e$ is injective on $\H^{\bullet}_T(X,
\FS)$ because $(e)$ is a regular sequence and it follows that
\[
\H^{\bullet}(X, \FS) = H^\bullet(X \times E \CM^*, q^*\FS) = 
\H^{\bullet}_{\CM^*}(X, \FS)/ (e \cdot \H^{\bullet}_{\CM^*}(X, \FS)).
\]

We now turn to the general case. Let $T = \CM^* \times \dots \times
\CM^*$ be the splitting of $T$ corresponding to the basis $(e_1,
\dots, e_n)$ of $\XS$ and let $T_i$ denote the subtorus
consisting of the last $i$ copies of $\CM^*$. In other words $T_i = \ker e_1 \cap \dots \cap \ker e_{n-i}$. Let $q_i$ denote the
quotient maps
\[
X \times ET \stackrel{q_0}{\to} X \times_{T_1} ET \stackrel{q_1}{\to} X
\times_{T_2} ET \stackrel{q_2}{\to} \dots \stackrel{q_{n-2}}{\to} X \times_{T_{n-1}} ET \stackrel{q_{n-1}}{\to} X \times_T ET
\]
and set
\[
H_i := H^\bullet(X \times_{T_i} ET, q_i^*q_{i+1}^* \dots q^*_{n-1}
\FS) \text{ and } H_n := H^\bullet_T(X, \FS).
\]
By induction, our regular sequence assumption and the spectral
sequence \eqref{chernclassss} we have
\begin{gather*}
H_i =  H_{i+1}/e_iH_{i+1}= H_n / (e_1,\dots, e_i)H_n.
\end{gather*}
Hence
\[
H^\bullet(X, \FS) = H^\bullet(X \times ET, q^*\FS) = H_0 =
H_n / (e_1,\dots, e_n)H_n
\]
as claimed.
\end{proof}


\section{Equivariant multiplicities}
\label{sec:eqmult}

In this section $T$ denotes a complex torus and $X$ is an
irreducible $n$-dimensional $T$-variety. In this section we
always take equivariant cohomology with coefficients in $k = \ZM$. Given a
$T$-variety $Y$, its equivariant constant and dualising sheaves are
denoted $\uk_Y$ and $\omega_Y$ respectively. We have
$\H_T^{-m}(Y; \omega_Y) = \H^T_{m}(Y)$
where $\H^T_{\bullet}(Y)$ denotes equivariant Borel-Moore
homology. Although we never make use of this isomorphism,
it may provide an intuitive aid for the reader below. 

We first recall the definition of the equivariant canonical class of $X$. 
Let $X^\reg$ denote the smooth locus of $X$, then $X^\reg$ has a
canonical orientation, and hence a canonical class $\mu_{X^\reg} \in
\H^{-2n}_T(X^\reg,\omega_{X^\reg})$. It is straightforward to see
that the restriction map
\[
r: \H^{-2n}_T(X,\omega_{X}) \to \H^{-2n}_T(X^\reg,\omega_{X^\reg})
\]
is an isomorphism.

\begin{definition} The \emph{equivariant canonical class} $\mu_X \in
  \H^{-2n}_T(X,\omega_{X})$ is defined to be the inverse image of
  $\mu_{X^\reg}$ under the isomorphism $r$.\end{definition}

For the rest of this section we assume that $X^T$ is finite. 

Set $U = X \setminus X^T$ and let $i$ (resp. $j$) denote the inclusion
of $X^T$ (resp. $U$). Given any $\FS \in D^t_T(X)$ we have a standard
triangle
\[
i_*i^! \FS \longto \FS \longto j_* j^* \FS \rightsquigarrow
\]
If we take $\FS = \omega_X$  the above triangle may be rewritten as
\[
i_*\uk_{X^T} \to \omega_X \to j_* \omega_U \rightsquigarrow
\]
because $i^!$ and $j^* = j^!$ preserve the dualising sheaf. Taking
equivariant (hyper)cohomology we obtain a long exact sequence
\[
\cdots \longto  H_T^m(X^T) \longto \H^m_T(X,\omega_X) \longto
\H_T^m(U,\omega_U) \longto H_T^{m+1}(X^T) \longto \cdots
\]
Standard arguments (see e.g. \cite{Brion, FW}) show that
$\H_T^\bullet(U,\omega_U)$ is a torsion module over $S$. Since
$\H_T^\bullet(X^T)$ is a free $S$-module (of rank $|X^T|$), the above
long exact sequence is in fact a short exact sequence of $S$-modules:
\[
0 \longto H_T^m(X^T) \longto \H^m_T(X,\omega_X) \longto
\H_T^m(U,\omega_U) \longto 0
\]
and if we tensor with $Q$, the fraction field of $S$, we obtain an isomorphism
\[
i_*: \bigoplus_{x \in X^T} Q = H_T^\bullet(X^T) \otimes_S Q \simto
\H^\bullet_T(X,\omega_X) \otimes_S Q.
\]
Hence we can find rational functions $(e_xX)_{x \in X^T} \in
\bigoplus_{x \in X^T} Q$ such that $i_*( (e_xX)_{x \in X^T}) = \mu_X
\otimes 1$.

\begin{definition}
\label{def:eqmult}
The \emph{equivariant multiplicity} of $x \in X$ is the
rational function $e_xX \in Q$.
\end{definition}

For further discussion about properties of the equivariant
multiplicity see the papers \cite{Arabia} and \cite{Brion}. Note that
in \cite{Brion}, Brion works instead with equivariant Chow groups,
however this may be seen to be equivalent to the above construction
using the cycle map from the equivariant Chow group to equivariant
Borel-Moore homology \cite[Section 2.8]{EG}.

\section{Filtrations and valuations} \label{sec:val}

Let $M$ be a free $\ZM_p$-module. Then $M$ is a lattice in the
${\QM_p}$-vector space $M_{\QM_p} := {\QM_p} \otimes_{\ZM_p} M$, and we have a
filtration:
\[
\dots \supset p^{-1} M \supset M \supset pM \supset \dots
\]

\begin{definition}
The \emph{valuation $v(m) = v_M(m)$ of $m \in M_{\QM_p}$ relative to $M$}
is the greatest $k \in \ZM$ such that $m\in p^k M$, or $+\infty$ if
$m = 0$.
\end{definition}

For example, we have $m\in
M$ if and only if $v(m) \geq 0$. For $d\in{\QM_p}$ and $m \in M_{\QM_p}$, we
have
\[
v_M(dm) = v_p(d) + v_M(m)
\]
and for $m_1$, $m_2 \in M$ we have
\[
v_M(m_1 + m_2) \geq \min(v_M(m_1), v_M(m_2)).
\]

Now suppose that $S = S(\XS)$ is the symmetric algebra over ${\ZM_p}$ on a
free $\ZM$-module $\XS$. Given $f, g \in S_{{\QM_p}}$ it is straightforward to
check that
\[
v_S(fg) = v_S(f) + v_S(g).
\]
It follows that, if $M$ is a free $S$-module, then
\[
v_M(fm) = v_S(f) + v_M(m)
\]
for $m \in M_{{\QM_p}}$ and $f \in S_{{\QM_p}}$.

\section{Proof of Theorem \ref{thm:main}}

Recall that we assume that $X$ is irreducible, affine and
$n$-dimensional and that $x \in X$ is an
attractive (hence unique) $T$-fixed point, and that $U = X \setminus
\{x\}$.
By Kumar's criterion and the fact that $p$-smoothness implies rational
smoothness, Theorem \ref{thm:main} is equivalent to the following:

\begin{theorem} \label{thm:main'}
Assume that $U$ is $p$-smoooth, that
$H_T^{\bullet}(U; \ZM)$ is free of $p$-torsion and that $X$ is
rationally smooth, so that the numerator $d$ in the equivariant
multiplicity is an integer. Then
\[
p \nupdownline d \Longleftrightarrow \text{$x \in X$ is $p$-smooth.}
\]
\end{theorem}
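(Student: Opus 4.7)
The strategy is to reformulate $p$-smoothness of $x$ as the assertion that $\H_T^\bullet(X;\omega_X;\ZM_p)$ is a free $S_{\ZM_p}$-module of rank one generated by the canonical class $\mu_X$, and then to read off the divisibility condition on $d$ by comparing two distinguished elements of this module via the defining relation of the equivariant multiplicity.

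First I would tensor the short exact sequence of Section~\ref{sec:eqmult} with $\ZM_p$. The $p$-smoothness of $U$ combined with the hypothesis that $\H_T^\bullet(U;\ZM)$ is free of $p$-torsion forces $\omega_U$ to reduce to $\underline{\ZM_p}[2n]$ on $U$, so that $\H_T^\bullet(U;\omega_U;\ZM_p)\cong \H_T^\bullet(U;\ZM_p)[2n]$ is itself free of $p$-torsion. One obtains a short exact sequence
\[
0 \to S_{\ZM_p} \xrightarrow{i_*} \H_T^\bullet(X;\omega_X;\ZM_p) \to \H_T^\bullet(U;\omega_U;\ZM_p) \to 0
\]
of $p$-torsion-free $S_{\ZM_p}$-modules. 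Combined with the rational smoothness of $X$ and the attractivity of $x$ (which together force $\H_T^\bullet(X;\omega_X;\QM_p)$ to be free of rank one over $S_{\QM_p}$ with generator $\mu_X$), this lets us view $\H_T^\bullet(X;\omega_X;\ZM_p)$ as a lattice $A\cdot\mu_X$ inside $S_{\QM_p}\cdot\mu_X$, where $A\subset S_{\QM_p}$ is an $S_{\ZM_p}$-submodule. The defining relation $\mu_X=i_*(e_xX)=i_*(d/\prod\chi_i)$ in the localisation rewrites as $\prod\chi_i\cdot\mu_X=d\cdot i_*(1)$ inside $\H_T^\bullet(X;\omega_X;\ZM_p)$, yielding $1\in A$ (from $\mu_X$) and $\prod\chi_i/d\in A$ (from $i_*(1)$).

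Next I would apply Proposition~\ref{prop:equiextend} to $\omega_X$ with $\FM_p$-coefficients; the required regular-sequence hypothesis follows by reducing the short exact sequence above modulo $p$. This yields $\H^\bullet(X;\omega_X;\FM_p)\cong \H_T^\bullet(X;\omega_X;\FM_p)/S_{\FM_p}^{+}\cdot\H_T^\bullet(X;\omega_X;\FM_p)$. The attractivity of $x$ then makes $p$-smoothness equivalent to this quotient being $\FM_p$ concentrated in degree $-2n$, which by graded Nakayama is equivalent to $\H_T^\bullet(X;\omega_X;\ZM_p)$ being freely generated by $\mu_X$ over $S_{\ZM_p}$, i.e.\ to the equality $A=S_{\ZM_p}$.

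For the forward direction, if $A=S_{\ZM_p}$, then reducing the identity $\prod\chi_i\cdot\mu_X=d\cdot i_*(1)$ modulo $p$ inside the free rank-one $S_{\FM_p}$-module $\H_T^\bullet(X;\omega_X;\FM_p)$ forces $p\nmid d$: otherwise the left-hand side reads $\prod\chi_i\cdot\bar\mu_X$, which is the image of $\prod\chi_i\in S_{\FM_p}^{+}$ and is nonzero by the reduced-fraction hypothesis $\gcd(d,\prod\chi_i)=1$ in $S$ (which rules out $p$-divisibility of any individual $\chi_i$), while the right-hand side is $0$, contradicting freeness. Conversely, if $p\nmid d$ then $\prod\chi_i/d\in S_{\ZM_p}$, so $i_*(S_{\ZM_p})\subset S_{\ZM_p}\cdot\mu_X$; a valuation argument in the sense of Section~\ref{sec:val}, applied to the lattice $A$ inside $S_{\QM_p}$ and combined with the short exact sequence, should then force $A=S_{\ZM_p}$. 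The main obstacle is precisely this converse direction: showing that the inclusion $S_{\ZM_p}\cdot\mu_X\subset \H_T^\bullet(X;\omega_X;\ZM_p)$ is an equality amounts, via the short exact sequence, to showing that the image $\mu_U$ of $\mu_X$ generates $\H_T^\bullet(U;\omega_U;\ZM_p)$ as an $S_{\ZM_p}$-module, which is not immediate from the stated hypotheses and will likely require exploiting the $p$-smoothness of $U$ via its own localisation theory.
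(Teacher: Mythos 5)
Your reformulation --- that $p$-smoothness of $x$ is equivalent to $\H^\bullet_T(X,\omega_X;\ZM_p)$ being the free rank-one lattice $S_{\ZM_p}\mu_X$ inside $\H_{\QM_p}$ --- is exactly the right framework, and your argument for the direction ``$x$ is $p$-smooth $\Rightarrow p\nmid d$'' is sound (modulo routine care about why the identity $\pi\cdot\mu_X = d\cdot\varphi(1)$ holds integrally, which indeed follows from $\ZM_p$-freeness and the resulting injection of $\H := \H^\bullet_T(X,\omega_X;\ZM_p)$ into its $\QM_p$-form). The substantive gap is precisely the converse, which you flag but do not resolve. The paper's key step here is a valuation bound: for every $h\in\H$ one has $v_M(h)\geq -v_p(d)$, where $M = S_{\ZM_p}\mu_X$. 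This is established by observing that $\pi$ annihilates $\H^\bullet_T(U,\omega_U;\ZM_p)$ --- a fact read off from the $\QM_p$-version of the short exact sequence, which takes the explicit form $0\to S_{\QM_p}\to S_{\QM_p}[2n]\to S_{\QM_p}/(\pi)[2n]\to 0$ --- so that $\pi h$ lies in the image of $\varphi$ and can be written as $f\cdot\varphi(1) = (f\pi/d)\mu_X$ with $f\in S_{\ZM_p}$. Taking valuations gives $v_M(h) = v_S(f) - v_p(d)\geq -v_p(d)$. Once you have this bound, the converse is immediate: if $p\nmid d$, then every $h\in\H$ has $v_M(h)\geq 0$, hence $\H\subset M$, hence $A=S_{\ZM_p}$. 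This lemma is the idea missing from your proposal; the remaining structure you describe is correct.

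A secondary remark: applying Proposition~\ref{prop:equiextend} with $\FM_p$-coefficients, as you propose, is delicate, because the regular-sequence hypothesis for $\H^\bullet_T(X,\omega_X;\FM_p)$ can genuinely fail when $A\neq S_{\ZM_p}$ --- which is exactly the case you are trying to detect, so the reasoning risks circularity. The cleaner route, and the one the paper takes, is to verify the regular-sequence condition over $\ZM_p$ (where it follows easily from the embedding $\H\hookrightarrow\H_{\QM_p}\cong S_{\QM_p}[2n]$) and then characterise $p$-smoothness as torsion-freeness of the $\ZM_p$-module $\H/(S^+\H)$, rather than passing to $\FM_p$.
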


Throughout this section, we always take coefficients in $\ZM_p$
unless otherwise stated. In particular, $\omega_X$ (resp. $\omega_U$)
denotes the $T$-equivariant dualising complex with coefficients in
${\ZM_p}$.
In Section \ref{sec:eqmult} we saw the short exact sequence coming from the standard
distinguished triangle for the decomposition $X = U \cup \{ x \}$:
\begin{equation} \label{eq:ses}
0 \longto S = \H^{\bullet}_T(\pt)\elem{\varphi} \H_T^\bullet(X,\omega_X) \elem{r}
\H_T^\bullet(U,\omega_U) \longto 0
\end{equation}
We assume from now on that $X$ is rationally smooth and abbreviate
\[
\H := \H_T^\bullet(X,\omega_X).
\]
By Kumar's criterion we can write
\begin{equation} \label{eq:kumarfrac}
e_xX = \frac{d}{\pi}
\end{equation}
where $d \in \ZM$, $\pi$ is a product of $n$ characters, and the
fraction is assumed to be reduced.

\begin{remark} \label{rem:pipositive}
In other words, we simplify
the fraction until at most one of $d$ and $\pi$ has positive
valuation. We will see in Lemma \ref{lem:valpi} that, in fact, under the assumptions of
the theorem only $d$ can have positive valuation.
\end{remark}

\begin{remark}
If there is a finite number of one-dimensional orbits,
then there are exactly $n$ of them and $\pi$ is the product of the
corresponding characters up to some scalar multiple (which may be
needed to simplify the fraction). However, our proof also applies when
there is an infinite number of one-dimensional orbits.
\end{remark}

\begin{lemma} \label{lem:ratsmooth}
As $X$ is rationally smooth, we have $\H_{{\QM_p}} \cong S_{{\QM_p}}[2n]$.  
\end{lemma}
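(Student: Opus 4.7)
The plan is to combine two observations: (i) rational smoothness gives a Verdier self-duality of the $\QM_p$-constant sheaf, which identifies $\omega_X$ (after inverting $p$) with the shifted constant sheaf; and (ii) the attractiveness of the fixed point $x$ makes $X$ equivariantly contractible onto $\{x\}$, which computes the constant-sheaf equivariant cohomology as $S_{\QM_p}$.

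First I would pass from $\ZM_p$-coefficients to $\QM_p$-coefficients. Since $\QM_p$ is flat over $\ZM_p$, tensoring gives $\H_{\QM_p} \cong \H_T^\bullet(X,\omega_X^{\QM_p})$, where $\omega_X^{\QM_p}$ denotes the dualising complex with $\QM_p$-coefficients. Rational smoothness is defined over $\QM$, but by the universal coefficient theorem it holds equally well over any field of characteristic zero; concretely, the stalk of the natural map $\underline{\QM_p}_X[2n] \to \omega_X^{\QM_p}$ at every point is an isomorphism, hence it is an isomorphism of sheaves, and this isomorphism lifts uniquely to the equivariant derived category $D_T(X;\QM_p)$ (e.g.\ by pulling back along $q$ and applying the same rational-smoothness argument on $X \times ET$ locally). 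Thus
\[
\H_{\QM_p} \;\cong\; \H_T^\bullet(X,\underline{\QM_p}_X)[2n].
\]

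Next I would use attractiveness of $x$. By definition, there exists a one-parameter subgroup $\lambda : \CM^* \to T$ such that $\lim_{t\to 0}\lambda(t)\cdot y = x$ for every $y \in X$; this map $\mathbb{A}^1 \times X \to X$ is $T$-equivariant, giving an equivariant deformation retraction of $X$ onto $\{x\}$. Hence the inclusion $\{x\} \hookrightarrow X$ induces an isomorphism
\[
\H_T^\bullet(X, \underline{\QM_p}_X) \;\simto\; \H_T^\bullet(\{x\}, \underline{\QM_p}) \;=\; S_{\QM_p}.
\]
Combining the two displayed isomorphisms gives the claim $\H_{\QM_p} \cong S_{\QM_p}[2n]$.

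The only mildly subtle point is the first step, namely upgrading the pointwise/non-equivariant self-duality afforded by rational smoothness to an isomorphism $\omega_X^{\QM_p} \cong \underline{\QM_p}_X[2n]$ in the equivariant derived category; everything else (flat base change for $\QM_p$ over $\ZM_p$, and the contraction provided by an attractive fixed point) is standard. In practice this subtlety is harmless because both $\omega_X^{\QM_p}$ and $\underline{\QM_p}_X[2n]$ are canonical equivariant objects whose underlying non-equivariant sheaves are isomorphic, and the isomorphism class of their $\Hom$ in the equivariant derived category is controlled by the same data on any smooth $T$-stable open cover.
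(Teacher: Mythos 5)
Your proof is correct and follows essentially the same route as the paper: pass to $\QM_p$-coefficients by flatness (which the paper phrases as the universal coefficient theorem), use rational smoothness to identify $\omega_X$ with the shifted constant sheaf over $\QM_p$, and use the equivariant contraction provided by the attractive fixed point to compute the equivariant cohomology of the constant sheaf as $S_{\QM_p}$. The one point you flag as subtle---lifting the non-equivariant self-duality isomorphism to $D_T(X;\QM_p)$---is in fact already handled by the paper's setup: the equivariant canonical class $\mu_X \in \H_T^{-2n}(X,\omega_X)$ furnishes a canonical map $\underline{\QM_p}_X[2n] \to \omega_{X,\QM_p}$ in the equivariant derived category, and checking it is an isomorphism can be done stalkwise after applying the forgetful functor, which is exactly what rational smoothness provides.
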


\begin{proof}
  By definition, $X$ is rationally smooth if and only if $ \omega_{X,\QM_p} \cong
  {\un \QM_{p,X}}[2n]$. If this is the case then
\[
\H_{\QM_p} \cong \H_T^{\bullet}(X, {\QM_p})[2n] \cong \H_T^{\bullet}(\{x\},
{\QM_p})[2n] \cong S_{{\QM_p}}[2n]
\]
where the first isomorphism follows from the universal coefficient
theorem, and the second follows because $x \in X$ is an attractive
fixed point (and so $X$ retracts equivariantly onto $x$).
\end{proof}

\begin{lemma} In \eqref{eq:ses} all modules are free over ${\ZM_p}$.
\label{lem:ofree}
\end{lemma}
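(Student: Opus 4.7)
The plan is to deduce freeness of $\H$ from freeness of $N := \H_T^\bullet(U;\omega_U)$. Indeed $S = S(\XS_{\ZM_p})$ is visibly free over $\ZM_p$, and once $N$ is also free the short exact sequence \eqref{eq:ses} splits as $\ZM_p$-modules (free modules being projective), so $\H \cong S \oplus N$ is free too. In our setting each $\ZM_p$-module $N^m$ is finitely generated, so it is enough to prove that $N$ has no $p$-torsion.

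To see this I would apply $\H^\bullet_T(U;-)$ to the distinguished triangle
\[
\omega_{U;\ZM_p} \elem{p} \omega_{U;\ZM_p} \longto \omega_{U;\FM_p} \longtriright
\]
coming from $\omega_{U;\FM_p} = \omega_{U;\ZM_p} \otimes^L_{\ZM_p} \FM_p$. The resulting long exact sequence breaks into universal-coefficient short exact sequences
\[
0 \longto N^m/pN^m \longto N^m_{\FM_p} \longto N^{m+1}[p] \longto 0,
\]
where $N^m_{\FM_p} := \H^m_T(U;\omega_{U;\FM_p})$ and $N^{m+1}[p]$ denotes the $p$-torsion submodule. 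The goal is to show the rightmost term vanishes.

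The key is a dimension count showing $\dim_{\FM_p} N^m_{\FM_p} = \mathrm{rank}_{\ZM_p} N^m$ for every $m$. On the $\FM_p$-side, $p$-smoothness of $U$ gives $\omega_{U;\FM_p} \cong \un{\FM_p}_U[2n]$, so $N^m_{\FM_p} \cong H^{m+2n}_T(U;\FM_p)$. The assumption that $H^\bullet_T(U;\ZM)$ has no $p$-torsion together with the flatness of $\ZM_p$ over $\ZM$ imply that $H^\bullet_T(U;\ZM_p)$ is torsion-free, so the universal coefficient theorem gives $H^{m+2n}_T(U;\FM_p) \cong H^{m+2n}_T(U;\ZM_p)/p$, whose $\FM_p$-dimension is $\dim_{\QM_p} H^{m+2n}_T(U;\QM_p)$. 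On the $\QM_p$-side, $U$ is rationally smooth as an open subset of the rationally smooth $X$, hence $\omega_{U;\QM_p} \cong \un{\QM_p}_U[2n]$, giving $N^m_{\QM_p} = H^{m+2n}_T(U;\QM_p)$ and therefore $\dim_{\QM_p} H^{m+2n}_T(U;\QM_p) = \mathrm{rank}_{\ZM_p} N^m$. This proves the claim.

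Substituting $\dim_{\FM_p} N^m_{\FM_p} = \mathrm{rank}_{\ZM_p} N^m$ into the short exact sequence above and applying the identity $\dim_{\FM_p} M/p = \mathrm{rank}_{\ZM_p} M + \dim_{\FM_p} M[p]$, valid for any finitely generated $\ZM_p$-module $M$, yields $\dim_{\FM_p} N^m[p] + \dim_{\FM_p} N^{m+1}[p] = 0$, which forces $N^m[p] = 0$ for all $m$. The main thing to be careful about is the interplay between the three coefficient rings and the consistent use of the shift $[2n]$; finite generation of each $N^m$ as a $\ZM_p$-module is routine in our affine $T$-variety setting.
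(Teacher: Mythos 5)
Your proof is correct, but it takes a noticeably different route from the paper's. The paper disposes of this lemma in one line: since $U$ is $p$-smooth, the dualising complex with $\ZM_p$-coefficients is already trivial, $\omega_{U,\ZM_p} \cong \un{\ZM_p}_U[2n]$, so $\H^\bullet_T(U,\omega_U) \cong \H^\bullet_T(U,\ZM_p)$, which is free precisely by the hypothesis that $H^\bullet_T(U;\ZM)$ has no $p$-torsion; freeness of $\H$ then follows as in your first paragraph. You instead avoid the $\ZM_p$-level trivialisation of $\omega_U$ altogether and only use the field-coefficient trivialisations $\omega_{U,\FM_p}\cong\un{\FM_p}_U[2n]$ and $\omega_{U,\QM_p}\cong\un{\QM_p}_U[2n]$, recovering the torsion-freeness of $\H^\bullet_T(U,\omega_{U,\ZM_p})$ by a universal-coefficient dimension count. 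What your approach buys is that it sidesteps the (true, but not completely formal) upgrade from the $\FM_p$-definition of $p$-smoothness to the statement over $\ZM_p$ — in effect your telescoping identity $\dim_{\FM_p} N^m[p] + \dim_{\FM_p} N^{m+1}[p] = 0$ is a hands-on proof of that upgrade at the level of equivariant cohomology; the cost is length and the need to track finite generation in each degree, which, as you note, is routine here. Both arguments use exactly the same hypotheses, so neither is more general than the other.
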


\begin{proof}
  Certainly $S$ is ${\ZM_p}$-free and $\H^{\bullet}_T(U, \omega_U) \cong
  \H^{\bullet}_T(U, {\ZM_p})$ is ${\ZM_p}$-free by assumption. Hence $\H$ is
  ${\ZM_p}$-free, being an extension of  $S$ and $\H^{\bullet}_T(U,
  \omega_U)$.
\end{proof}

\begin{lemma}
\label{lem:pi}
The $S$-module $\H^{\bullet}_T(U, \omega_U)$ is
  annihilated by $\pi$ and in $\H$ we have the relation
\[d \cdot \varphi(1) =  \pi \cdot \mu_X.\]
\end{lemma}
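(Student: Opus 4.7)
\textbf{My plan} is to derive the identity $d \cdot \varphi(1) = \pi \cdot \mu_X$ first, then deduce the annihilation statement from it. The underlying input is the defining equation of the equivariant multiplicity --- in $\H \otimes_S Q$ one has $\mu_X \otimes 1 = e_xX \cdot \varphi(1) \otimes 1$ --- and the bulk of the work is to descend such equations from various rationalisations back to the integral modules.

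For the relation, I first note that the natural map $\H \to \H \otimes_S Q$ is injective: by Lemma \ref{lem:ofree} the module $\H$ is $\ZM_p$-free so it embeds in $\H_{\QM_p}$, which by Lemma \ref{lem:ratsmooth} is the torsion-free $S_{\QM_p}$-module $S_{\QM_p}[2n]$; thus $\H$ is torsion-free over the domain $S$. Substituting $e_xX = d/\pi$ into the defining equation and multiplying by $\pi$ yields $\pi \cdot \mu_X = d \cdot \varphi(1)$ in $\H \otimes_S Q$, and injectivity transports the identity to $\H$ itself.

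For the annihilation statement, I apply the surjection $r$ from \eqref{eq:ses} to the relation just established; since $r \circ \varphi = 0$ this gives $\pi \cdot r(\mu_X) = 0$ in $\H^\bullet_T(U, \omega_U)$. To upgrade this to annihilation of the whole module, I pass to $\QM_p$-coefficients: $\mu_X \otimes 1$ lies in the one-dimensional $\QM_p$-space $(\H_{\QM_p})^{-2n}$ and is nonzero, because it maps under the isomorphism of Section \ref{sec:eqmult} to the fundamental class $\mu_{X^\reg}$, which generates the free $\ZM_p$-module $\H_T^{-2n}(X^\reg, \omega_{X^\reg}) \cong \H_T^0(X^\reg; \ZM_p) \cong \ZM_p$ (using irreducibility of $X$ to ensure $X^\reg$ is connected). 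Hence $\mu_X \otimes 1$ is an $S_{\QM_p}$-generator of $\H_{\QM_p} \cong S_{\QM_p}[2n]$, so $r(\mu_X) \otimes 1$ generates the rationalisation $\H_T^\bullet(U, \omega_U)_{\QM_p}$ as an $S_{\QM_p}$-module, and $\pi$ annihilates it. Since $\H_T^\bullet(U, \omega_U)$ is $\ZM_p$-free by hypothesis (invoked through Lemma \ref{lem:ofree}), it embeds in its rationalisation and the annihilation descends to the integral module. The main obstacle here is verifying that $\mu_X \otimes 1$ is nonzero --- and therefore an $S_{\QM_p}$-generator --- in $(\H_{\QM_p})^{-2n}$; everything else is formal manipulation with the short exact sequence \eqref{eq:ses} and the universal coefficient theorem.
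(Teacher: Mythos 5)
Your proposal is correct and follows essentially the same route as the paper: both pass to $\QM_p$-coefficients via the universal coefficient theorem, use rational smoothness to identify $\H_{\QM_p}$ as the rank-one free module $S_{\QM_p}[2n]$ generated by $\mu_X\otimes 1$, read off the relation from the definition of $e_xX$, and descend using the $\ZM_p$-freeness of Lemma \ref{lem:ofree}. Your explicit verification that $\mu_X\otimes 1$ is nonzero (hence a generator), via its restriction to $X^{\reg}$, is a point the paper leaves implicit.
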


\begin{proof}
By the universal coefficient theorem, if we tensor \eqref{eq:ses} over
${\ZM_p}$ with ${\QM_p}$ we obtain the corresponding short exact sequence with
coefficients in ${\QM_p}$:
\begin{equation} \label{eq:sesK}
0 \longto S_{{\QM_p}} \elem{\varphi} \H_T^\bullet(X,\omega_{X,\QM_p}) \elem{r}
\H_T^\bullet(U,\omega_{U,\QM_p}) \longto 0
\end{equation}
However, $X$ is rationally smooth and hence
$\H_T^\bullet(X,\omega_{X,\QM_p}) \cong S_{{\QM_p}} [2n]$ by Lemma
\ref{lem:ratsmooth}. By the definition of the
equivariant multiplicity,  
\eqref{eq:sesK} this short exact sequence has the form:
\begin{equation} \label{eq:ses2}
0 \longto S_{{\QM_p}} \elem{\varphi} S_{{\QM_p}} [2n] \elem{r}
S_{{\QM_p}}/(\pi) [2n] \longto 0
\end{equation}
where $\varphi(1) = {\pi}/{d}$.

Now, by Lemma \ref{lem:ofree} all modules in \eqref{eq:ses} are free
over ${\ZM_p}$. Hence we have a commutative diagram with vertical injections
\[
\xymatrix{
0 \ar[r] & S  \ar[r]^(.3){\varphi} \ar@{^{(}->}[d] &
\H_T^\bullet(X,\omega_X) \ar[r]^{r} \ar@{^{(}->}[d] 
& \H_T^\bullet(U,\omega_U) \ar[r] \ar@{^{(}->}[d] & 0 \\
0 \ar[r] & S_{{\QM_p}} \ar[r] & S_{{\QM_p}}[2n] \ar[r] & 
S_{{\QM_p}}/(\pi)[2n] \ar[r] & 0
}
\]

We conclude that, in $\H = \H_T^\bullet(X,\omega_X)$ we have the equation
\begin{equation} \label{eq:fund}
d \cdot \varphi(1) =  \pi \cdot \mu_X.
\end{equation}
because this equation holds after extension of scalars to
$\QM_p$, and the above diagram shows that $\H$
injects into the extension of scalars.
\end{proof}

Lemmas \ref{lem:ratsmooth} and \ref{lem:ofree} show that, if we set $M
= S\mu_X$ then $M$ is free as an $S$-module, and gives a lattice
inside $\H_{{\QM_p}}$. We use the lattice $M$ and apply the terminology of
Section \ref{sec:val}.

\begin{lemma}
\label{lem:valbound}
For all $h \in \H$ we have 
\[
v_M(h) \ge -v_p(d).
\]
\end{lemma}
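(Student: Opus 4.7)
The idea is to exploit the short exact sequence \eqref{eq:ses} together with the key identity $d\cdot\varphi(1)=\pi\cdot\mu_X$ from Lemma \ref{lem:pi} to write every element of $\H$ explicitly as $(f/d)\mu_X$ for some $f\in S$, and then read off the valuation.

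First I would take $h\in\H$ and push it into the quotient: since $\H^\bullet_T(U,\omega_U)$ is annihilated by $\pi$ (Lemma \ref{lem:pi}), the element $\pi\cdot r(h)$ vanishes, so $\pi\cdot h$ lies in $\ker(r)=\im(\varphi)$. Thus $\pi\cdot h=f\cdot\varphi(1)$ for some $f\in S$. Now I would work in the ambient $\QM_p$-vector space $\H_{\QM_p}\cong S_{\QM_p}[2n]$, where by Lemma \ref{lem:pi} we have the identity $\varphi(1)=(\pi/d)\mu_X$. Substituting gives
\[
\pi\cdot h \;=\; \frac{f\pi}{d}\,\mu_X \quad \text{in } \H_{\QM_p}.
\]
Because $\H_{\QM_p}$ is a free $S_{\QM_p}$-module of rank one on the generator $\mu_X$, multiplication by the nonzero element $\pi\in S_{\QM_p}$ is injective, so we may cancel and conclude that $h=(f/d)\mu_X$ inside $\H_{\QM_p}$.

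Finally I would apply the valuation calculus of Section \ref{sec:val} to the lattice $M=S\mu_X$. Since $\mu_X$ is an $S$-basis of $M$, one has $v_M(s\mu_X)=v_S(s)$ for every $s\in S_{\QM_p}$. Using the multiplicativity $v_S(f/d)=v_S(f)-v_S(d)=v_S(f)-v_p(d)$ (the last equality because $d\in\ZM$), and the trivial bound $v_S(f)\ge 0$ for $f\in S$, I obtain
\[
v_M(h)\;=\;v_S(f/d)\;=\;v_S(f)-v_p(d)\;\ge\;-v_p(d),
\]
as required.

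The argument is almost entirely formal once one has the fundamental identity $d\varphi(1)=\pi\mu_X$ in hand; the only point that needs mild care is the cancellation of $\pi$, which is why I would pass to $\H_{\QM_p}$ and use that it is a free rank-one $S_{\QM_p}$-module (Lemma \ref{lem:ratsmooth}). No real obstacle is expected, since all the structural work has already been done in Lemmas \ref{lem:ratsmooth}, \ref{lem:ofree}, and \ref{lem:pi}.
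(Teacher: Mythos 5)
Your proof is correct and follows essentially the same route as the paper: use the exact sequence \eqref{eq:ses} and the annihilation by $\pi$ to write $\pi\cdot h = f\cdot\varphi(1)$, substitute $\varphi(1)=(\pi/d)\mu_X$, and read off the valuation. The only cosmetic difference is that you cancel $\pi$ inside $\H_{\QM_p}$ before applying $v_M$, whereas the paper applies $v_M$ first and cancels $v_S(\pi)$ from both sides; these are the same computation.
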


\begin{proof}
Given $h \in \H$ then $\pi \cdot h$ is in the kernel of $r$ by
Lemma \ref{lem:pi} and hence we can write $\pi \cdot h = f \cdot \varphi(1) =
\frac{1}{d}f\pi\cdot \mu_X$ for some $f \in S$. Applying $v_M$ yields
\[
v_S(\pi) + v_M(h) = v_S(f) + v_S(\pi) + v_M(\mu_X) - v_p(d)
\]
and hence
\[
v_M(h) = v_S(f) - v_p(d)
\]
because $v_M(\mu_X) = 0$. The claim now follows because $v_S(f) \ge 0$.
\end{proof}

In the following lemma, we keep the promise made in Remark \ref{rem:pipositive}.

\begin{lemma}
\label{lem:valpi}
We have $v_S(\pi) = 0$.
\end{lemma}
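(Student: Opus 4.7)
The plan is to argue by contradiction: suppose $v_S(\pi) > 0$, and show that the identity $d \cdot \varphi(1) = \pi \cdot \mu_X$ from Lemma \ref{lem:pi} forces $\varphi(1)$ to vanish in $\H/p\H$, which contradicts the injectivity of $S/pS \hookrightarrow \H/p\H$ that follows from reducing the short exact sequence \eqref{eq:ses} modulo $p$.

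In more detail, assume $v_S(\pi) > 0$. By Remark \ref{rem:pipositive}, the reducedness of the fraction $d/\pi$ forces $v_p(d) = 0$, so $d$ is a unit in $\ZM_p$. The hypothesis $v_S(\pi) > 0$ means $\pi \in pS$, and hence
\[
d \cdot \varphi(1) \;=\; \pi \cdot \mu_X \;\in\; pS\mu_X \;\subseteq\; p\H,
\]
where I use that $M = S\mu_X$ is a submodule of $\H$. Multiplying by the unit $d^{-1} \in \ZM_p$ gives $\varphi(1) \in p\H$. On the other hand, Lemma \ref{lem:ofree} says that every term in \eqref{eq:ses} is $\ZM_p$-free, so tensoring over $\ZM_p$ with $\FM_p$ is exact and yields a short exact sequence
\[
0 \longto S/pS \longto \H/p\H \longto \H^\bullet_T(U,\omega_U)/p \longto 0,
\]
in which the left-hand map is injective. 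Therefore the class $\varphi(1) \bmod p$, being the image of the generator $1 \in S/pS$, is nonzero in $\H/p\H$ — contradicting $\varphi(1) \in p\H$. Hence $v_S(\pi) = 0$.

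There is essentially no obstacle here beyond identifying the right two structures to compare: all of the real work has already been packaged into Lemmas \ref{lem:ratsmooth}, \ref{lem:ofree} and \ref{lem:pi}, which guarantee that the identity $d\varphi(1) = \pi\mu_X$ takes place inside the $\ZM_p$-lattice $\H$ (not merely after inverting $p$) and that mod-$p$ reduction preserves exactness of \eqref{eq:ses}. The delicate point to watch is not to confuse the $\ZM_p$-lattices $M \subseteq \H \subseteq M_{\QM_p}$, but since the containment $pM \subseteq p\H$ is obvious this causes no trouble.
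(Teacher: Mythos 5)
Your argument is correct, and it does take a genuinely different route from the paper's. Both proofs start from the reducedness of $d/\pi$ (so you may assume $v_p(d)=0$) and the identity $d\,\varphi(1) = \pi\,\mu_X$ of Lemma \ref{lem:pi}, but they then diverge. The paper works with the valuation $v_M$ relative to the $S$-free lattice $M = S\mu_X$: it shows $v_M(\varphi(1)) = v_S(\pi)$, and separately shows $v_M(\varphi(1)) \le 0$ by observing that $\pi$ kills $\H_T^\bullet(U,\omega_U)$, extracting $\tilde\pi\mu_X$ (with $\tilde\pi = \pi/p^{v_S(\pi)}$), and using $p$-torsion-freeness of $\H_T^\bullet(U,\omega_U)$ to conclude $r(\tilde\pi\mu_X)=0$, hence $\tilde\pi\mu_X \in \im\varphi$, hence $v_M(\tilde\pi\mu_X)\ge v_M(\varphi(1))$. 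You instead observe that $\varphi(1)$ is a \emph{primitive vector} in $\H$ — that is, $\varphi(1)\notin p\H$ — directly from the fact that $\bar\varphi:S/pS\to\H/p\H$ is injective (using Lemma \ref{lem:ofree} for the $\Tor$-vanishing), and then note that $\pi\in pS$ together with $d\in\ZM_p^\times$ would force $\varphi(1)\in p\H$. This sidesteps the lattice $M$ and the valuation bookkeeping entirely and is arguably the cleaner route; it also makes more transparent exactly which consequence of the torsion-freeness hypothesis is being used (namely that the short exact sequence \eqref{eq:ses} remains exact after reduction mod $p$). Both proofs are fine; yours is more economical.
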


\begin{proof} Because we have assumed that the fraction
  \eqref{eq:kumarfrac} is reduced, if $v_p(d) > 0$ then $v_p(\pi) =
  0$. So we may assume that $v_p(d) = 0$. 
We can write $\pi = p^{v_S(\pi)}{\tilde \pi}$ with
$v_S(\tilde \pi) = 0$. We have
\[
\pi \cdot \mu_X = p^{v_S(\pi)}{\tilde \pi} \cdot \mu_X
= d \cdot \varphi(1).
\]
As $\pi$ annihilates $\H_T^\bullet(U,\omega_U)$ we have 
$p^{v_S(\pi)}r({\tilde \pi} \cdot \mu_X) =0$. But by assumption $\H_T^\bullet(U,\omega_U)$
is torsion-free over $\ZM_p$ and so $r({\tilde \pi} \cdot \mu_X) =
0$. Hence ${\tilde \pi} \cdot \mu_X$ is in the image of $\varphi$ and
so $v_M(\tilde \pi \cdot \mu_X) \ge v_M(\varphi(1))$. Using that
$v_S(\tilde{\pi})= v_p(d) = 0$ and Lemma \ref{lem:pi} it
follows that
\[
0 = v_M(\tilde \pi \cdot \mu_X) \geq v_M(\varphi(1))  = v_M(d \cdot \varphi(1)) 
= v_M(\pi \cdot \mu_X) = v_S(\pi) \ge 0 \]
and so $v_S(\pi) = 0$ as claimed.
\end{proof}

\begin{remark} The relation between the modules $M \subset \H \subset
  \H_{{\QM_p}}$ and the induced filtration by valuation is illustrated in
  Figure \ref{fig:inclusion}. \end{remark}

\begin{figure} \label{fig:inclusion}
\begin{tikzpicture}
\def\mumin{-7}

\begin{scope}
\clip (-5.5,2.5) rectangle (2.5,\mumin - 0.5);


\filldraw[fill=gray!28] (-6,\mumin) rectangle (4,3);

\filldraw[fill=gray!35,draw=black] (-4,0) -- (-4,-2) -- (-3,-2) -- (-3,-3) --
(-2,-3) -- (-1,-3) -- (-1,-5) -- (0,-5) -- (0, \mumin) -- (4, \mumin)
-- (4,3) -- (-4,3) -- (-4,0);
\filldraw[fill=gray!45,draw=black] (0,\mumin) rectangle (4,3);

\node (o) at (0.5,0.5) {$\pi\mu_X$};
\node (phi) at (-3.5,0.5) {$\varphi(1)$};
\node (phi) at (-3.5,-1.5) {$*$};
\node (mu) at (0.5,\mumin+0.5) {$\mu_X$};


\foreach \x in {-5,-4,...,2}
  \draw[very thin, gray] (\x,\mumin-1) -- (\x, 5);

\foreach \y in {1,0,\mumin+1,\mumin}
  \draw[very thin, gray] (-6,\y) -- (4, \y);

\draw (-4,0) -- (-4,-2) -- (-3,-2) --  (-3,-3) --
(-2,-3) -- (-1,-3) -- (-1,-5) -- (0,-5) -- (0, \mumin) -- (4, \mumin)
-- (4,3) -- (-4,3) -- (-4,0);
\draw (0,\mumin) rectangle (4,3);
\draw (-6,\mumin) -- (3,\mumin);

\filldraw[fill=gray!28,draw=gray!28] (-3,-4) circle (0.6cm);
\node at (-3,-4) {$\H_{{\QM_p}}$};

\node at (-1.5,-1) {$\H$};

\filldraw[fill=gray!45,draw=gray!45] (1.5,-3) circle (0.8cm);
\node at (1.5,-3) {$M = S\mu_X$};

\end{scope}

\draw[draw=white] (-5.5,2.5) rectangle (2.5,\mumin - 0.5);

\node at (0.5,2.8) {\small $0$};
\node at (-0.5,2.8) {\small $-1$};
\node at (-1.5,2.8) {\small $-2$};
\node at (-2.3,2.8) {\small $\dots$};
\node at (-3.5,2.8) {\small $-v_p(d)$};

\node at (-1.5,3.4) {filtration by $v_M(m)$};

\node at (-6.2,0.5) {$0$};
\node at (-6.2,\mumin+0.5) {${-2n}$};

\begin{scope}
  \node[rotate=90] at (-6.2,-3) {cohomological degree $\rightarrow$};
\end{scope}

\end{tikzpicture}
\caption{The inclusions $M \subset \H \subset \H_{{\QM_p}}$.}
\end{figure}
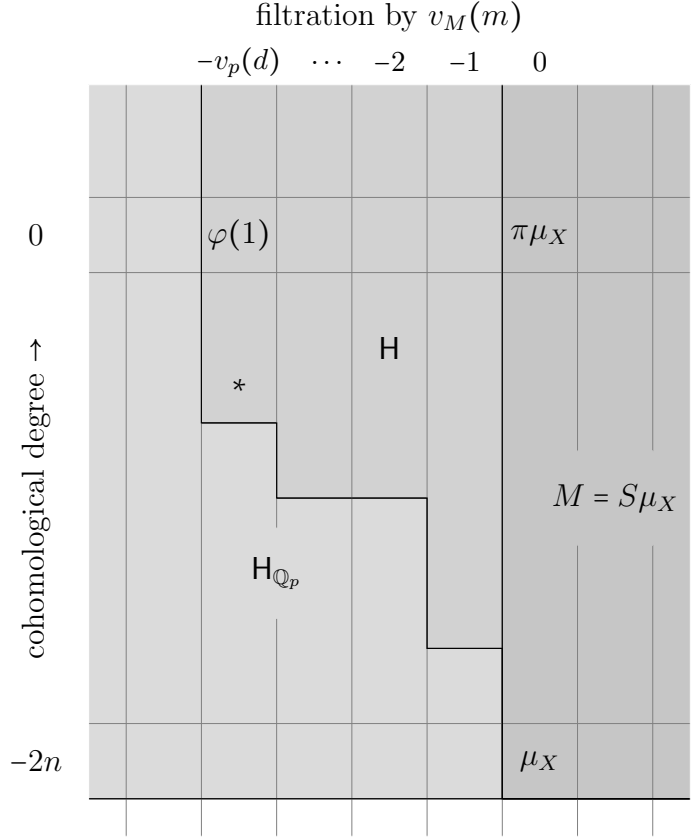

\begin{lemma} \label{lem:reg}
If $(e_1, \dots, e_r)$ denotes a basis for $\XS(T)$ then the images of
$(e_1, \dots, e_r)$ in $S$ give a regular sequence for $\H$.\end{lemma}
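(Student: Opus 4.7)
The plan is to establish regularity of $(e_1, \dots, e_r)$ on $\H$ by a $\Tor$ computation based on the short exact sequence
\[
0 \to S \xrightarrow{\varphi} \H \to M \to 0, \qquad M := \H^\bullet_T(U, \omega_U).
\]
Since the Koszul complex on $(e_1, \dots, e_r)$ is a free $S$-resolution of $\ZM_p = S/(e_1, \dots, e_r)$, regularity amounts to $\Tor_i^S(\ZM_p, \H) = 0$ for all $i > 0$. Using that $S$ is $S$-free, the long exact sequence gives $\Tor_i^S(\ZM_p, \H) \cong \Tor_i^S(\ZM_p, M)$ for $i \geq 2$ and an exact sequence $0 \to \Tor_1^S(\ZM_p, \H) \to \Tor_1^S(\ZM_p, M) \xrightarrow{\partial} \ZM_p$, so the claim reduces to $\Tor_i^S(\ZM_p, M) = 0$ for $i \geq 2$ together with the injectivity of $\partial$.

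The key intermediate claim is that $M \cong S/(\pi)[2n]$ as graded $S$-modules. Let $\bar\mu_X \in M^{-2n}$ denote the image of the fundamental class; since $X$ is irreducible, $\H^{-2n} = \ZM_p \mu_X$, and since $\varphi(S)$ lives in nonnegative degrees, $M^{-2n} = \ZM_p \bar\mu_X$. An element $s \in S$ satisfies $s\bar\mu_X = 0$ iff $s\mu_X \in \varphi(S) = (\pi/d) S\mu_X$ inside $\H \subset S_{\QM_p}\mu_X$, i.e.\ iff $sd = \pi f$ in $S$ for some $f \in S$. Combining the reduction hypothesis $\gcd(d, \pi) = 1$ with the primitivity $v_S(\pi) = 0$ from Lemma \ref{lem:valpi}, the UFD $S$ forces $\pi \mid s$, so $\mathrm{Ann}_S(\bar\mu_X) = (\pi)$. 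This yields an $S$-linear injection $S/(\pi)[2n] \hookrightarrow M$. Since both sides are $\ZM_p$-lattices of equal rank in each graded piece of $M_{\QM_p} = S_{\QM_p}/(\pi)[2n]$, the cokernel is $\ZM_p$-torsion; to conclude that it vanishes one tensors with $\FM_p$ over $\ZM_p$ and runs the identical cyclicity argument (still available because $v_S(\pi) = 0$ ensures $\pi_{\FM_p} \neq 0$), forcing the $p$-torsion of the cokernel to vanish, and a $\ZM_p$-torsion module with no $p$-torsion is zero.

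Given $M \cong S/(\pi)[2n]$, the two-term free resolution $0 \to S \xrightarrow{\pi} S[2n] \to M \to 0$ immediately gives $\Tor_i^S(\ZM_p, M) = 0$ for $i \geq 2$ and $\Tor_1^S(\ZM_p, M) \cong \ZM_p$, because $\pi$ acts as zero on $\ZM_p = S/(e_1,\dots,e_r)$. The connecting map $\partial$ is multiplication by the extension class of the SES in $\Ext^1_S(M, S) \cong S/(\pi)$; by Lemma \ref{lem:pi} (the relation $d\varphi(1) = \pi\mu_X$) this class equals $d \bmod \pi$, so $\partial$ is multiplication by $d$ on $\ZM_p$, which is injective since $d$ is a nonzero integer. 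The main obstacle is the surjectivity step above --- upgrading $S/(\pi)[2n] \hookrightarrow M$ to an equality, equivalently showing $\H$ is $S$-generated by $\mu_X$ and $\varphi(1)$ --- and this is precisely where the hypothesis that $\H^\bullet_T(U; \ZM)$ has no $p$-torsion enters decisively, guaranteeing $\ZM_p$-freeness of $M$ (via Lemma \ref{lem:ofree}) so that $M \otimes_{\ZM_p} \FM_p$ has the correct Hilbert series to match $S_{\FM_p}/(\pi_{\FM_p})[2n]$.
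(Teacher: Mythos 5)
Your approach is genuinely different from the paper's. The paper dispatches the lemma in one line: it simply appeals to the inclusion $\H \hookrightarrow \H_{\QM_p} = S_{\QM_p}$ and the fact that $(e_1,\dots,e_r)$ is a regular sequence on the polynomial ring $S_{\QM_p}$, from which it reads off the defining condition of a regular sequence for $\H$. (This is terser than one might like --- a graded $S$-submodule of $S_{\QM_p}$ need not in general inherit a regular sequence, so the authors are implicitly leaning on the extra structure of $\H$ established in the surrounding lemmas --- but that is their argument.) You instead reduce the statement to a $\Tor$ computation and, crucially, to the much stronger intermediate structural claim $\H^\bullet_T(U,\omega_U) \cong S/(\pi)[2n]$ as graded $S$-modules, from which you deduce the vanishing of the relevant Koszul homology. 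If the intermediate claim held, your route would be a legitimate alternative and would in fact deliver more information than the lemma asks for.

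However, there is a genuine gap precisely at the step you flag as ``the main obstacle.'' You produce an injection $S/(\pi)[2n] \hookrightarrow M := \H^\bullet_T(U,\omega_U)$ via the annihilator computation (which is fine, given $v_S(\pi)=0$), observe the cokernel $C$ is $\ZM_p$-torsion, and then claim $C = 0$ by tensoring with $\FM_p$ and ``running the identical cyclicity argument'' / invoking a Hilbert series match. Neither of these actually closes the gap. Tensoring $0 \to S/(\pi)[2n] \to M \to C \to 0$ with $\FM_p$ yields
\[
0 \longto \Tor_1^{\ZM_p}(C,\FM_p) \longto S_{\FM_p}/(\pi)[2n] \longto M \otimes_{\ZM_p} \FM_p \longto C/pC \longto 0,
\]
and since the outer two terms are $C[p]$ and $C/pC$, the equality of Hilbert series of the two middle terms (which you correctly deduce from $\ZM_p$-freeness of $M$ and $v_S(\pi)=0$) only gives $\dim_{\FM_p} C[p]_i = \dim_{\FM_p}(C/pC)_i$ in each degree --- an identity that holds automatically for any finite abelian $p$-group, so it carries no content and certainly does not force $C = 0$. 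And the ``cyclicity argument'' for $\mathrm{Ann}_S(\bar\mu_X) = (\pi)$ relied essentially on embedding $\H$ into the integral domain $S_{\QM_p}$ and cancelling there; reduced mod $p$ this embedding is unavailable, so the argument does not transport. What you would actually need is injectivity of $S_{\FM_p}/(\pi) \to M\otimes \FM_p$ (equivalently $C[p]=0$), and nothing in the proposal establishes that. Until that surjectivity/cyclicity over $\FM_p$ is supplied by some genuinely new input (for instance a direct spectral-sequence computation of $\H^\bullet_T(U;\FM_p)$ using $p$-smoothness of $U$), the proof is incomplete.
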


\begin{proof}
  Using the inclusion $\H \hookrightarrow \H_{{\QM_p}} = S_{{\QM_p}}$ we see
  that, for all $1 \le i \le r-1$, if $e_{i+1}h \in \langle e_1,
  \dots, e_i \rangle \H$ then $h \in \langle e_1, \dots, e_i \rangle \H$
  which is the condition for $(e_1, \dots, e_r)$ to give a regular sequence.
\end{proof}

\begin{lemma} \label{lem:psmooth}
$X$ is $p$-smooth if and only if $\H^{\bullet}(X,
  \omega_X)$ is torsion-free.\end{lemma}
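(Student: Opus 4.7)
The plan is to compare $\H^\bullet(X,\omega_X)$ to its extension of scalars to $\QM_p$ and to its reduction modulo $p$, using the attractive fixed point at $x$ to match the mod-$p$ side with $p$-smoothness, and then to match the $\ZM_p$-side with torsion-freeness via the universal coefficient theorem.

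First, rational smoothness gives $\omega_{X,\QM_p}\cong\un{\QM_p}_X[2n]$, and the attractive fixed point makes $X$ equivariantly contractible to $\{x\}$, so
\[
\H^\bullet(X,\omega_{X,\QM_p})\cong \H^\bullet(X;\QM_p)[2n]\cong \QM_p[2n].
\]
By flatness, $\H^\bullet(X,\omega_X)\otimes_{\ZM_p}\QM_p\cong\QM_p[2n]$. Since $\H^\bullet(X,\omega_X)$ is a bounded, finitely generated graded $\ZM_p$-module (it equals $\H/S^+\H$ by Proposition~\ref{prop:equiextend} and Lemma~\ref{lem:reg}, and $\H$ is finitely generated over $S$), it decomposes non-canonically as $\ZM_p[2n]\oplus T^\bullet$ for some finite graded torsion $\ZM_p$-module $T^\bullet$, and torsion-freeness is equivalent to $T^\bullet=0$.

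Second, I would express $p$-smoothness on the mod-$p$ side. Because $U$ is $p$-smooth by assumption, $X$ is $p$-smooth if and only if $x$ is $p$-smooth in $X$, that is, iff $i_x^*\omega_{X,\FM_p}\cong\FM_p[2n]$. The contracting $\CM^*$-subgroup attached to the attractive fixed point, combined with Braden's hyperbolic localisation in its fully attractive case, produces a natural isomorphism $\H^\bullet(X,\omega_{X,\FM_p})\cong i_x^*\omega_{X,\FM_p}$. Hence $X$ is $p$-smooth if and only if $\H^\bullet(X,\omega_{X,\FM_p})\cong\FM_p[2n]$.

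Finally, I combine the two sides via the universal coefficient theorem
\[
\H^k(X,\omega_{X,\FM_p})\cong \H^k(X,\omega_X)\otimes_{\ZM_p}\FM_p\;\oplus\;\Tor_1^{\ZM_p}(\H^{k+1}(X,\omega_X),\FM_p).
\]
Plugging in $\H^\bullet(X,\omega_X)=\ZM_p[2n]\oplus T^\bullet$ and using the equality $\dim_{\FM_p}(T^k/pT^k)=\dim_{\FM_p}\Tor_1^{\ZM_p}(T^k,\FM_p)$ (both are the length of $T^k$), summation over $k$ gives
\[
\dim_{\FM_p}\H^\bullet(X,\omega_{X,\FM_p})= 1+2\dim_{\FM_p}(T^\bullet/pT^\bullet),
\]
which equals $1$ iff $T^\bullet/pT^\bullet=0$, iff $T^\bullet=0$ by Nakayama. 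Combined with the previous step, this yields the stated equivalence. The hard part is the Braden identification in the second step: it relies on the $\CM^*$-equivariance of $\omega_{X,\FM_p}$ and the contracting nature of the $\CM^*$-action, and is the only step that genuinely uses more than the formal lemmas set up in Sections~\ref{sec:eqmult}--\ref{sec:val}; one could alternatively try to sidestep it by applying Proposition~\ref{prop:equiextend} to $\omega_{X,\FM_p}$ directly, but this requires verifying that $(e_1,\dots,e_r)$ is a regular sequence on $\H\otimes_{\ZM_p}\FM_p$, which is not automatic from Lemma~\ref{lem:reg}.
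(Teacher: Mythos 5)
Your proof is correct and relies on the same crucial non-formal input as the paper: the contraction isomorphism $\H^\bullet(X,\omega_{X,k})\cong \H^\bullet(i_x^*\omega_{X,k})$ for the attractive fixed point $x$, which the paper cites as Proposition 2.2 of \cite{FW} and you attribute to Braden's hyperbolic localisation in the fully attractive case. Where you diverge is in how the coefficient rings are compared. The paper phrases $p$-smoothness directly in terms of the $\ZM_p$-dualising complex (namely $\omega^0_{X,y}\cong\ZM_p[2n]$ for all $y$), applies the contraction isomorphism with $\ZM_p$ coefficients, and then is done once rational smoothness identifies the free part of $\H^\bullet(\omega^0_{X,x})$ as a copy of $\ZM_p$ in degree $-2n$. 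You instead keep the $\FM_p$-coefficient definition of $p$-smoothness, run the contraction argument over $\FM_p$, and transfer to $\ZM_p$ by a universal coefficient theorem dimension count, with rational smoothness entering via the $\QM_p$-extension of scalars. Both routes are valid; the paper's is shorter and avoids the UCT bookkeeping, while yours makes the base-change of coefficient rings entirely explicit. Your closing remark is also on target: trying to sidestep the contraction by feeding $\omega_{X,\FM_p}$ directly into Proposition~\ref{prop:equiextend} would require a regular-sequence statement over $\FM_p$ that Lemma~\ref{lem:reg} (which works inside $\H\hookrightarrow\H_{\QM_p}$) does not supply.
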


\begin{proof}
In the proof we denote by $\omega^0_X$ the non-equivariant dualising
complex on $X$ (with $\ZM_p$ coefficients). By definition,
$X$ is $p$-smooth if and only if
\begin{equation}
\label{eq:p}
\forall y \in X, \quad \omega^0_{X,y} \cong \ZM_p[2n].
\end{equation}

By assumption, $U$ is $p$-smooth so this holds for all $y \neq x$ in
$X$. By a standard argument (see for example the attractive
Proposition 2.2 of \cite{FW}) we know that
\[
\H^{\bullet}(\omega^0_{X,x}) \cong \H^{\bullet}(X, \omega^0_X).
\]
Because $X$ is assumed to be rationally smooth, we know that the free
part of $\H^{\bullet}(\omega^0_{X,x})$ is concentrated in degree $-2n$
where it is of rank one. Hence we have \eqref{eq:p} if and only if
$\H^{\bullet}(X, \omega^0_X)$ is torsion-free. \end{proof}

\begin{proof}[Proof of Theorem \ref{thm:main'}]
By Proposition \ref{prop:equiextend} and Lemma \ref{lem:reg} we have
\[
\H^{\bullet}(X, \omega_X) = \H / (S^+\H).
\]
By Lemma \ref{lem:psmooth} $X$ is $p$-smooth if and only if $\H/(S^+\H)$
is torsion-free.

Now choose $m \in \H^i$ and let $\overline{m}$ denote its class in $\H/
(S^+\H)$. Then $v_M(dm) = v_p(d) + v_M(m) \ge 0$ by Lemma
\ref{lem:valbound}.
Hence
$dm \in S\mu_X$. In other words, $d\overline{m} = 0$ unless $i =
-2n$. It follows that multiplication by $d$ annihilates the
torsion in $\H/ (S^+\H)$. If $p \nupdownline d$ then multiplication by
$d$ is an automorphism of $\H/ (S^+\H)$, and hence $\H/ (S^+\H)$ is
torsion-free.

Now let us assume that $p \updownline d$, i.e. $v_p(d) > 0$.
Let $f \in S$ be a homogeneous element of maximal degree such that
$\varphi(1) = f h$ for some $h \in \H$. Note that $h \notin
S^+ \H$.\footnote{A clue as to the position of $h$ in our diagram is
  given by an asterix.}
By Lemma \ref{lem:valbound} and Lemma \ref{lem:pi}, we have
\[
- v_p(d) = v_M(\varphi(1)) = v_S(f) + v_M(h) \geqslant 0 - v_p(d)
\]
hence we have both equalities $v_S(f) = 0$ and $v_M(h) = -v_p(d)$.
In particular, $h \notin \H^{-2n} = \ZM_p \mu_X$. Now,
$v_M(d \cdot h) = 0$ so $d h \in M = S \mu_X$. By the
previous observation, we actually have
$d h \in S^+ \mu_X \subset S^+ \H$.
So the image $\ov h$ of $h$ in $\H / S^+ \H$ is non-zero and
torsion. Since $\H / S^+ \H$ is not torsion-free, $X$ is not $p$-smooth.
\end{proof}

\section{The case of Schubert varieties}
\label{sec:schubert}

In this section, $G$ denotes a connected reductive complex algebraic group, and we make
a choice $G \supset B \supset T$ of a Borel subgroup and a maximal
torus. Let $X = G/B$ be the flag variety and $W$ the Weyl group. For
$w\in W$, let $C_w := BwB/B$ be the corresponding Bruhat cell (which
is an affine space of dimension equal to the length of $w$), with
closure the Schubert variety $X_w = \ov C_w$. We have the Bruhat
decomposition
\[
X = \bigsqcup_{w \in W} C_w.
\]
(More generally, we could take $X$ be a partial flag variety for a
Kac-Moody group, with appropriate modifications.)

Recall from \cite{KL2} (or \cite{Kumar:book} in the Kac-Moody case)
that, for any elements $x \leq w$ in $W$, we can find an affine
neighbourhood $\widetilde N$ of $C_x$ in $X_w$, a closed subset $N$ in
$\widetilde N$ and an isomorphism
\[
C_x \times N \elem{\sim} \widetilde N \subset X_w.
\]

We will use the following result which is proved in \cite[Corollary 8.9]{FW}.

\begin{proposition}
\label{prop:torsion-free}
Suppose that $N$ is $p$-smooth. Then 
$\H^\bullet_T(N\setminus\{x\}, \ZM_p)$ is torsion-free.
\end{proposition}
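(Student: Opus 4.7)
The plan is to exploit the fact that $x$ is the unique $T$-fixed point of the slice $N$ and that $N$ admits a contracting $\Gm$-action through $T$ (standard features of the Kazhdan--Lusztig slice recalled at the beginning of the section), combined with the localisation triangle for $\{x\} \hookrightarrow N$, and then to verify the resulting torsion-freeness by reducing modulo $p$.

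First I would record the consequences of attractiveness and $p$-smoothness: the contracting action gives an equivariant deformation retraction of $N$ onto $\{x\}$ (compatible with $T$ because $T$ is abelian), yielding $\H^\bullet_T(N;\ZM_p) \cong \H^\bullet_T(\{x\};\ZM_p) = S$, which is $\ZM_p$-free; while $p$-smoothness gives $\omega_N \cong \underline{\ZM_p}[2n]$ with $n = \dim_\CM N$, and hence $\H^\bullet_T(N;\omega_N) \cong S[2n]$ as a free rank-one graded $S$-module.

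Next, from the standard distinguished triangle
\[
i_* i^! \omega_N \longto \omega_N \longto j_* \omega_U \triright
\]
(with $i \colon \{x\} \hookrightarrow N$ and $j \colon U \hookrightarrow N$), I would use $i^!\omega_N = \omega_{\{x\}} = \underline{\ZM_p}$ to conclude that the induced Gysin map $S \to S[2n]$ is a degree-zero morphism between free rank-one graded $S$-modules, hence is multiplication by a single element $\alpha \in S^{2n}$. Because $S$ vanishes in odd cohomological degrees, the resulting long exact sequence collapses to
\[
\H^\bullet_T(U;\omega_U) \cong S/\alpha S.
\]
Torsion-freeness over $\ZM_p$ is now equivalent to primitivity of $\alpha$ in $S = \ZM_p[t_1,\dots,t_r]$, i.e.\ $\bar\alpha \neq 0$ in $S_{\FM_p}$.

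To verify primitivity, I would run the same argument with $\FM_p$-coefficients (using $\omega_{N,\FM_p} \cong \underline{\FM_p}[2n]$ from $p$-smoothness) in which the Gysin map becomes multiplication by $\bar\alpha$. If $\bar\alpha$ vanished, the $\FM_p$-long exact sequence would embed a free rank-one $S_{\FM_p}$-submodule into $\H^\bullet_T(U;\omega_{U,\FM_p})$, contradicting equivariant localisation which forces this module to be $S_{\FM_p}$-torsion (since $U^T = \emptyset$). Hence $\bar\alpha \neq 0$, and since $U$ is itself $p$-smooth, $\H^\bullet_T(U;\ZM_p) \cong \H^{\bullet + 2n}_T(U;\omega_U)$ inherits torsion-freeness. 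The delicate point is the clean collapse of the long exact sequence to the formula $\H^\bullet_T(U;\omega_U) \cong S/\alpha S$, which rests on the freeness of $\H^\bullet_T(N;\omega_N)$ (from attractiveness) together with the concentration of $S$ in even cohomological degrees. An alternative route closer to the parity-sheaves methodology of~\cite{JMW2} would observe that by $p$-smoothness $\underline{\ZM_p}_N[2n]$ is Verdier self-dual and hence an even parity sheaf, invoke the general $S$-freeness of $T$-equivariant hypercohomology of parity sheaves, and conclude using the same triangle-and-primitivity argument to transfer torsion-freeness from $N$ to $U$.
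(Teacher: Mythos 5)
The paper does not actually prove Proposition~\ref{prop:torsion-free}; it is a direct citation of \cite[Corollary~8.9]{FW}, whose proof relies on parity sheaves and the good properties of the Bruhat stratification of Schubert varieties. Your proposal attempts a genuinely different, self-contained argument by pure equivariant localisation, and if correct it would apply to \emph{any} affine $T$-variety with an attractive fixed point; unfortunately, that generality is exactly where it breaks.

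The gap is the invocation of ``equivariant localisation'' over $\FM_p$: the assertion that $U^T = \emptyset$ forces $\H^\bullet_T(U;\omega_{U,\FM_p})$ to be a torsion $S_{\FM_p}$-module is false in general. Consider $N = \CM$ with $T = \CM^*$ acting by $t\cdot z = t^p z$, so that $x = 0$ is attractive and $N$ is smooth, hence $p$-smooth. Then $U = \CM^* \cong T/\mu_p$, so $\H^\bullet_T(U;\FM_p) \cong \H^\bullet(B\mu_p;\FM_p)$, and the restriction $S_{\FM_p} = \FM_p[e] \to \H^\bullet(B\mu_p;\FM_p)$ sends $e$ to a nonzero degree-$2$ class: the module is \emph{free} of rank $2$ over $S_{\FM_p}$, not torsion. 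Correspondingly, $\H^\bullet_T(U;\ZM_p) \cong \H^\bullet(B\mu_p;\ZM_p)$ has $p$-torsion in every positive even degree, so this example is an outright counterexample to the generalised statement your argument would establish. (Your $\ZM_p$-level steps---the Gysin description of the short exact sequence and the identification $\H^\bullet_T(U;\omega_U) \cong (S/\alpha S)[2n]$ once $\alpha \neq 0$---are fine; the problem is solely the primitivity step via $\FM_p$-localisation.)

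The alternative parity-sheaf route you sketch at the end is closer to the truth, but ``Verdier self-dual'' does not by itself imply ``parity'': parity requires control of stalks and costalks along a stratification whose strata have torsion-free equivariant cohomology. It is precisely the affine paving of Schubert varieties by Bruhat cells, with $\H^\bullet_T(C_w;\ZM_p) \cong S$ free, that supplies this input in \cite{FW}; a general attractive $T$-fixed point (as in the example above) does not. So any correct proof must genuinely use the Schubert-variety structure, and the localisation shortcut cannot replace it.
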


Hence we can apply our main theorem \ref{thm:main} in the case of
Schubert varieties. Theorem \ref{thm:main schubert} and Corollary
\ref{cor:Z} follow immediately. Theorem \ref{thm:pQ} follows from
\cite[Corollary 3.5]{Dyer} which shows that the numerator of the
equivariant multiplicity at a rationally smooth point in a Schubert
variety for a finite flag variety is always of the form $2^a3^b$, with
$b$ possibly non-zero only if $G$ contains a component of type $G_2$,
and $a = b = 0$ in simply-laced types.

Lastly, in \cite{MOV}, Malkin, Ostrik and Vybornov study minimal
degenerations in affine Grassmannians up to smooth equivalence. They
find the following possibilities: simple singularities of type $A$ in
the codimension $2$ case, minimal nilpotent singularities (the
singularity of $0$ in the closure of the minimal nilpotent orbit in a
simple Lie algebra), and some presumably new singularities which they
call quasi-minimal of type $ac_n$ (of codimension $2n$, arising in
type $C_n$), and $ag_2$ and $cg_2$ (of codimension $4$, arising in
type $G_2$). They compute their local (rational) intersection
cohomology using the Kazhdan-Lusztig algorithm, and their equivariant
multiplicities. It turns out that $ac_n$ (resp. $ag_2$, $cg_2$) has
the same local rational intersection cohomology as a minimal
singularity of type $a_n$ (resp. $a_2$, $c_2$). Malkin, Ostrik and
Vybornov conjectured that the pairs $(ac_n, a_n)$, $(ag_2, a_2)$,
$(ac_2, ag_2)$ and $(c_2, cg_2)$ are not smoothly equivalent. Among
those, only the last one involves rationally smooth singularities. The
numerator for $g_2$ is $18$, whereas the numerator for $cg_2$ is $27$,
so $cg_2$ is $2$-smooth and $g_2$ is not. Hence these singularities
are not smoothly equivalent. To prove their conjecture in the other
cases, one needs either to do a more involved calculation or to use
the geometric Satake correspondence \cite{JW}.

\section{A zoo of (rationally smooth) points}

We conclude with some examples of rationally smooth attractive fixed
points, their equivariant multiplicities and the cohomology of the
complement. It was based on these and other examples that the authors
were led to believe that something like the main theorem
\ref{thm:main} must hold. The reader can also verify that
the more precise Conjecture \ref{conj:torsion} holds in all of these
cases.

\subsection{Smooth points} Let $x = 0 \in X = \CM^n$ with a torus $T$
acting linearly with characters $\chi_1, \dots, \chi_n$. Then
\begin{gather*}
\dim X = n, \\
e_xX = \frac{1}{\chi_1 \chi_2 \dots \chi_n}.
\end{gather*}
The cohomology $H^\bullet(X \setminus \{ x \}; \ZM)$ is given by:
\[
\begin{array}{cccccccc}
H^0 & H^1 & H^2 & \dots & H^{2n-2} & H^{2n-1} \\
\hline
\ZM & 0 &  0 & \dots & 0 & \ZM
\end{array}
\]

\subsection{Kleinian singularities of type $A$} Let $x = 0 \in X =
\CM^2/\mu_{n+1}$ where $\mu_{n+1}$ denotes the group of $n+1$ roots of
unity acting via $\mu \cdot (x,y) = (\mu x, \mu^{-1}y)$. Then $X$ is a
Kleinian surface singularity of type $A_n$. We can embed $X$ as the locus of $(u,v,w)$ in
$\CM^3$ such that $uv = w^{n+1}$. Then $X$ has an attractive $T =
(\CM^*)^2$ action
given by
\[ (\l_1,\l_2) \cdot (u,v,w) = (\l_1 \l_2^n u, \l_1^{-1}\l_2 v, \l_2 w).\]\
The map $\pi : X \to \CM^2$ induced by the projection $(u,v,w) \mapsto (u,v)$ is finite of degree $n+1$. Applying \eqref{degree d} and the case of a smooth point discussed above yields
\begin{gather*}
e_xX = \frac{n+1}{(e_1 + ne_2) (e_2 - e_1)}
\end{gather*}
where $e_1$ and $e_2$ are the characters of $T$ given by $e_i(\l_1, \l_2) = \l_i$.
The cohomology $H^\bullet(X \setminus \{ x \}; \ZM)$ is given by:
\[
\begin{array}{cccccccc}
H^0 & H^1 & H^2 & H^{3} \\
\hline
\ZM & 0 &  \ZM/(n+1) & \ZM
\end{array}
\]
This calculation follows easily from the discussion in 
\cite[Section 3.4]{JMW}.

\subsection{Minimal nilpotent orbit singularities}
Let $G$ denote a connected complex simple algebraic group, $\gg$ its
Lie algebra, $\NC \subset \gg$ its nilpotent cone and $\OC_{\mini}
\subset \NC$ the minimal nilpotent orbit. The closure
$X = \ov{\OC}_{\mini}$ is singular, with unique singular point $0 \in
\gg$. It turns out that $\ov{\OC}_{\mini}$ is only rationally smooth in
types $C_n$ (including types $C_1 = A_1$ and $C_2 = B_2$) and
$G_2$. In this section we discuss what our main theorem has to say in
these cases.

Let $T \subset G$ denote a maximal torus. Then $X = \ov{\OC}_{\mini}$
is a $\tilde T := T \times \CM^*$ variety, where $T$ acts by conjugation and
$\CM^*$ acts by scaling. We write the characters of $T \times \CM^*$
as $\XS^*(T) \oplus \ZM \delta$ where $\XS^*(T)$ denotes the character
lattice of $T$ and $\delta$ denotes the identity character of
$\CM^*$. Finally, let us denote the set of roots by
$\Phi \subset \XS^*(T)$, the subset of long roots by 
$\Phi_{\textrm{lg}} \subset \Phi$ and the Weyl group by $W$.

Fix a Borel subgroup $B$ of $G$ containing $T$ and let $\Phi^+ \subset
\Phi$ denote the non-trivial characters of $T$ which occur in the Lie algebra of $B$.
One can describe the
singularity $X$ as follows \cite{cohmin}: let $\tilde \alpha$ denote the highest
root with respect to $\Phi^+$, and let $\tilde I$ denote the set of simple
roots orthogonal to $\tilde \alpha$. The stablizer of $\tilde \alpha$
in $W$ is the parabolic subgroup $W_{\tilde I}$, and the subgroup of $G$ stabilizing
the root subspace $\gg_{\tilde \alpha} \subset \gg$ is the parabolic subgroup
$P_{\tilde I} = BW_{\tilde I}B$.
Then $X$ is obtained from the line bundle $Y := G \times^{P_{\tilde I}}
\gg_{\tilde \alpha}$ over $G/P_{\tilde I}$ by contracting the null section, and the contraction
morphism $\pi : Y \to X$ is a resolution of singularities. The variety
$Y$ can be seen as the set of pairs $(x,L)$ where $L$ is a line
contained in $\ov \OC_\mini$ and $x$ is an element of $L$.

One may apply \eqref{degree d} to the resolution $\pi$ to find a formula for the
equivariant multiplicity valid in any type:
\begin{equation}
\label{eq:min}
e_0 \;\ov{\OC}_{\min}
= - \sum_{w \in W/W_{\tilde{I}}}
\frac{1}{w((\delta+\tilde{\a})(\prod_{\a \in \Phi^+ \setminus \Phi^+_{\tilde{I}}} \a))} 
\end{equation}
Indeed, $\tilde T$-fixed points in $\pi^{-1}(0)$ must be in the null
section of $Y$ because they are fixed by $\CM^*$. Now the null section
is $G/P_{\tilde I}$, and the $T$-fixed points are in bijection with
$W/W_{\tilde I}$. One finds the sum of the right-hand-side, the case
$w = 1$ corresponding to the line $\gg_{\tilde \alpha}$. Since $Y$ is
a vector bundle over $G/P_{\tilde I}$, the tangent space to $Y$ at
this point can be decomposed into the tangent space to
$G/P_{\tilde I}$ at its base point, whose $\tilde T$-weights are the
$-\alpha$ for $\alpha \in \Phi^+ \setminus \Phi^+_{\tilde{I}}$, and
the tangent space of the fibre $\gg_{\tilde \alpha}$ whose $\tilde
T$-weight is $\delta + \tilde \alpha$. Not that
$\dim G/P_{\tilde I} = \dim \OC_\mini - 1$ is odd, hence the global
minus sign.

The one-dimensional $\tilde T$-orbits are the long root subspaces. Let
now $\varphi : X \to \bigoplus_{\alpha\in\Phi_{\textrm{lg}}}
\gg_\alpha =: V$ be the composition
of the inclusion $X \to \gg$ followed by the natural projection.
By the proof of Theorem 18 in \cite{Brion} this morphism is finite,
and in the types $C_n$ and $G_2$ where $X$ is rationally smooth, it is
surjective. In this case, if $d$ denotes its degree, then
\begin{equation}
\label{eq:eqmult finite}
e_0 X = \frac d {\prod_{\alpha\in\Phi_{\textrm{lg}}}(\delta + \alpha)}.
\end{equation}

We first discuss the case of type $C_n$. So let $G = \textrm{Sp}(2n)$
be the symplectic group and $\sp_{2n}$ its Lie algebra. 
In this case $X$ is isomorphic to $\CM^{2n}/\{\pm 1\}$ (diagonal action) \cite[Section 3.3]{JMW},
and the morphism $X \to Z$ of the last paragraph is identified with
$\CM^{2n}/\{\pm 1\} \to \CM^{2n}/\{\pm 1\}^{2n} \simeq \CM^{2n}$
which is of degree $2^{2n-1}$. In this case it is more convenient to
apply formula \eqref{eq:eqmult finite}. We have
\begin{gather*}
\dim X = 2n, \\
e_0X = \frac{2^{2n-1}}{\prod_{\a \in \Phi_\textrm{lg}} (\delta + \a)}.
\end{gather*}

On the other hand, the cohomology $H^\bullet(X \setminus \{ 0 \}; \ZM)$ is given by:
\[
\begin{array}{ccccccccc}
H^0 & H^1 & H^2 & H^3 & H^4 & \dots &  H^{4n-3} &  H^{4n-2} & H^{4n-1} \\
\hline
\ZM & 0 &  \ZM/(2) & 0 & \ZM/(2) & \dots & 0 &  \ZM/(2) & \ZM
\end{array}
\]

Now suppose that $G$ is of type $G_2$. One case use \eqref{eq:min} to
compute the equivariant multiplicity:
\begin{gather*}
\dim X = 6, \\
e_0X = \frac{18}{\prod_{\a \in \Phi_\textrm{lg}} (\delta + \a)}.
\end{gather*}
The calculation of the cohomology of
$X \setminus \{ 0 \} = \OC_{\mini}$ is performed in \cite[Section 3.9]{cohmin}:
\[
\begin{array}{cccccccccccc}
H^0 & H^1 & H^2 & H^3 & H^4 & H^5 &  H^6 & H^7 & H^8 & H^9 & H^{10} & H^{11} \\
\hline
\ZM & 0 & 0 & 0 & \ZM/(3) & 0 & \ZM/(2) & 0 & \ZM/(3) & 0 & 0 & \ZM
\end{array}
\]


\subsection{The quasi-minimal $cg_2$ singularity} In this example we let $G$ be a
simple algebraic group of type $G_2$. We use the notation of
\cite{MOV}: $\mathcal{G}_G$ denotes the affine
Grassmannian of $G$, $\om_1$ and $\om_2$ denote the fundamental
coweights (which we regard as points of $\mathcal{G}_G$), 
$\overline{\mathcal{G}_{\om_2}}$ denotes the Schubert variety
indexed by $\om_2$, and
\[
X = (L^{<0}G \cdot \om_1) \cap
\overline{\mathcal{G}_{\om_2}}
\]
is an affine normal slice to the orbit of $\om_1$ in
$\overline{\mathcal{G}_{\om_2}}$. Then $X$ is a
$\widetilde{T}$-variety where $\widetilde{T} = 
T \times \CM^*$ denotes the extended torus, and $\om_1$ is a
attractive $\widetilde{T}$-fixed point.

We have
\begin{gather*}
\dim X = 4, \\
e_xX = \frac{27}{(\a_0 + \a_1)(\a_0 + \a_1 + 3\a_2)(2\a_0 + 5\a_1 +
  6\a_2)(2\a_0 + 5\a_1 + 9\a_2)}.
\end{gather*}
The cohomology $H^\bullet(X \setminus \{ x \}; \ZM)$ is given by:
\[
\begin{array}{ccccccccc}
H^0 & H^1 & H^2 & H^3 & H^4 & H^5 & H^6 & H^7 \\
\hline
\ZM & 0 &  \ZM/(3) & 0 & \ZM/(3) & 0 & \ZM/(3) & \ZM
\end{array}
\]
(The calculation of the equivariant multiplicity was performed, using
Kumar's formula \cite{Kumar},  in
\cite{MOV}. The 
calculation of the cohomology $H^\bullet(X \setminus \{ x \}; \ZM)$ 
may be performed using moment graph techniques \cite{JW}).

\subsection{Other Kleinian singularities}
We conclude by discussing Kleinian singularities of types $D$ and $E$.
These examples are intended to convince the reader
that once one drops the assumption that $H^\bullet_T(U)$ is
torsion-free one cannot hope to have a connection between
$p$-smoothness and the equivariant multiplicity (as in our main
theorem).

Recall that if $\G \subset SL_2(\CM)$ is a finite subgroup then the
quotient $X = \CM^2 / \G$ is a Kleinian singularity. Also $X$ has a unique
singular point $x$ given by the image of $0 \in \CM^2$. If $\Gamma$ is cyclic,
then $X$ is a Kleinian singularity of type $A_{|\Gamma|-1}$ and our
main theorem applies (after appropriate choice of torus action). We have discussed this case
above.

However, if $\Gamma$ is not cyclic then $X$ still admits an 
attractive $\CM^*$-action. The equation of $X$ in $\CM^3$ as well as
the weights of $\CM^*$ are given as follows:
\begin{align*}
  D_n & : X^{n-1} + XY^2 + Z^2 = 0, &\text{weights: } & (2,n-2,n-1), \\
  E_6 & : X^4 + Y^3 + Z^2 = 0, &\text{weights: } & (3,4,6), \\
  E_7 & : X^3Y + Y^3 + Z^2 = 0, &\text{weights: } & (4,6,9), \\
  E_8 & : X^5 + Y^3 + Z^2 = 0, &\text{weights: } & (6,10,15).
\end{align*}
In each case the projection $(X,Y,Z) \mapsto (X,Y)$ induces a finite surjective map of degree $2$. Applying \eqref{degree d} one may calculate
\[
e_xX = \frac{1}{d \chi^2}
\]
where $\chi$ denotes the identity character of $\CM^*$, and $d = n-2$
in type $D_n$, and $d = 6, 12$ and $30$ in types $E_6$, $E_7$ and
$E_8$ respectively.

However in \cite{decperv} the first author has shown that $X$ is $p$-smooth if and
only if $p$ does not divide the index of connection of the
corresponding root system. In particular, $X$ is
$p$-smooth if and only if $p \ne 2$ in type $D_n$, $p \ne 3$ in type $E_6$, $p \ne
2$ in type $E_7$ and $X$ is $\ZM$-smooth in type $E_8$. One can
check directly that $H_T^\bullet(X \setminus \{ x \} , \ZM)$ has
torsion in all cases except $E_8$. (Which explains why these examples do not
contradict our main theorem!) 
Hence in these cases there seems to be no relation between those $p$ for
which $p$ is not $p$-smooth and the equivariant multiplicity.

\end{document}